\setlist[description]{%
  %topsep=30pt,               % space before start / after end of list
  itemsep=0.05cm,               % space between items
  font={\normalfont\textsc}, % set the label font
%  font={\bfseries\sffamily\color{red}}, % if colour is needed
 leftmargin=\parindent,
 labelindent=\parindent
}
\theoremstyle{definition}
\newtheorem{theorem}{Theorem}[section]
\newtheorem{lemma}[theorem]{Lemma}
\newtheorem{proposition}[theorem]{Proposition}
\newtheorem{corollary}[theorem]{Corollary}
\theoremstyle{definition}
\newtheorem{remark}[theorem]{Remark}
\newtheorem{question}{Question}
\newtheorem{remarks}[theorem]{Remarks}
\newtheorem*{conjecture*}{Conjecture}
\newtheorem*{problem*}{Problem}
\newtheorem*{question*}{Question}
\definecolor{blue-url}{RGB}{0,0,100}
\definecolor{red-url}{RGB}{100,0,0}
\definecolor{green-url}{RGB}{0,100,0}
\definecolor{light-yellow}{RGB}{255,255,128}
\definecolor{light-blue}{RGB}{193,255,255}
\definecolor{light-red}{RGB}{239,83,80}
\renewcommand{\emptyset}{\varnothing}
\renewcommand{\setminus}{\smallsetminus}
\renewcommand{\,}{\kern 0.1em}
\providecommand\llb{\llbracket}
\providecommand\rrb{\rrbracket}
\newcommand{\evid}[1]{\textsf{#1}}
\newcommand{\fin}{\mathrm{fin}}
\newcommand{\aut}{\mathrm{Aut}}
\newcommand{\fdim}{\mathrm{b.dim}}
\newcommand{\maxgap}{\overline{\mathrm{gap}}}
\newline\vspace{\abovedisplayskip}\hbox to \textwidth\bgroup\hss$\displaystyle}
\egroup\vspace{\belowdisplayskip}}
\DeclareFontFamily{OMX}{MnSymbolE}{}
\DeclareSymbolFont{MnLargeSymbols}{OMX}{MnSymbolE}{m}{n}
\DeclareFontShape{OMX}{MnSymbolE}{m}{n}{
	<-6>  MnSymbolE5
	<6-7>  MnSymbolE6
	<7-8>  MnSymbolE7
	<8-9>  MnSymbolE8
	<9-10> MnSymbolE9
	<10-12> MnSymbolE10
	<12->   MnSymbolE12
}{}
\DeclareFontShape{OMX}{MnSymbolE}{b}{n}{
	<-6>  MnSymbolE-Bold5
	<6-7>  MnSymbolE-Bold6
	<7-8>  MnSymbolE-Bold7
	<8-9>  MnSymbolE-Bold8
	<9-10> MnSymbolE-Bold9
	<10-12> MnSymbolE-Bold10
	<12->   MnSymbolE-Bold12
}{}
\let\llangle\@undefined
\let\rrangle\@undefined
\DeclareMathDelimiter{\llangle}{\mathopen}%
{MnLargeSymbols}{'164}{MnLargeSymbols}{'164}
\DeclareMathDelimiter{\rrangle}{\mathclose}%
{MnLargeSymbols}{'171}{MnLargeSymbols}{'171}
\begin{document}
\title{On Power Monoids and their Automorphisms}
\author{Salvatore Tringali}
\address{(S.~Tringali) School of Mathematical Sciences, Hebei Normal University | Shijiazhuang, Hebei province, 050024 China}
\email{salvo.tringali@gmail.com}
\urladdr{http://imsc.uni-graz.at/tringali}
\author{Weihao Yan}
\address{(W.~Yan) School of Mathematical Sciences, Hebei Normal University | Shijiazhuang, Hebei province, 050024 China}
\email{weihao.yan.hebnu@outlook.com}

\subjclass[2020]{Primary 11B13. Secondary 20M13, 39B52}

\keywords{Automorphism group, power monoid, sumset.}

\begin{abstract}
Endowed with the binary operation of set addition, the family $\mathcal{P}_{{\rm fin},0}(\mathbb{N})$ of all finite subsets of $\mathbb{N}$ containing $0$ forms a monoid, with the singleton $\{0\}$ as its neutral element.

We show that the only non-trivial automorphism of $\mathcal P_{{\rm fin},0}(\mathbb N)$ is the involution $X \mapsto \max X - X$. The proof leverages ideas from additive number theory and proceeds through an unconventional induction on what we call the boxing dimension of a finite set of integers, that is, the smallest number of (discrete) intervals whose union is the set itself.
\end{abstract}

\maketitle

\thispagestyle{empty}

\section{Introduction}
\label{sec:intro}

Endowed with the binary op\-er\-a\-tion of setwise multiplication 
$(X, Y) \mapsto \{xy \colon x \in X,\, y \in Y\}$,
the finite subsets of a (multiplicatively written) monoid $H$ containing the neutral element $1_H$ of $H$ form a (multiplicatively written) monoid, hereby denoted by $\mathcal P_{\fin,1}(H)$ and called the \evid{reduced} (\evid{finitary}) \evid{power monoid} of $H$. In particular, the neutral element of $\mathcal P_{\fin,1}(H)$ is the singleton $\{1_H\}$. (We refer to Howie's monograph \cite{Ho95} for generalities on monoids.)  

Reduced power monoids are the most basic items in a complex hierarchy of ``highly non-can\-cel\-la\-tive'' monoids, generically referred to as \textsf{power monoids}. These structures have been first systematically studied by Tamura and Shafer in the late 1960s \cite{Tam-Sha1967}, and more recently investigated in a series of papers by Fan and Tringali \cite{Fa-Tr18}, Antoniou and Tringali \cite{An-Tr18}, Bienvenu and Geroldinger \cite{Bie-Ger-22}, and Tringali and Yan \cite{Tri-Yan-23(a)}.

There are many reasons why power monoids are interesting. First, they provide a natural algebraic framework for a number of important problems in additive combinatorics and related fields, like S\'{a}rk\"ozy's conjecture \cite[Conjecture 1.6]{Sark08} on the ``additive irreducibility'' of the set of quadratic residues modulo $p$ for all but finitely many primes $p \in \mathbb N$. 
Second, the arithmetic of power monoids --- with emphasis on ques\-tions concerned with the possibility or impossibility of factoring certain sets as a product of other sets that, in a sense, cannot be ``broken down into smaller pieces'' --- is a rich topic in itself and has been pivotal to an ongoing radical transformation \cite{Tr20(c), Co-Tr-21(a), Co-Tr-22(a), Tr21(b), Co-Tr-22(b)} of the ``abstract theory of factorization'', a subfield of algebra so far largely confined to commutative domains and cancellative commutative monoids \cite{Ger-Hal-06, Ge-Zh-20a, Got-And-2022}. Third, power monoids (and, more generally, power semigroups) have long been known to play a central role in the theory of automata and formal languages \cite{Alm02, Pin1995}.

In the present paper, we address the problem of determining the automorphism group of $\mathcal P_{\fin,1}(H)$. (Unless specified otherwise, a mor\-phism will always mean a \emph{monoid hom\-o\-mor\-phism}, cf.~Remark \ref{rem:autos}\ref{rem:autos(1)}.) 
Apart from being a natural question (the automorphisms of an object in a certain category are a measure of the ``symmetries'' of the object itself and their investigation is a fundamental problem in many areas of mathematics), a better understanding of the algebraic properties of power monoids will hopefully lead to a better understanding of other aspects of their theory, such as the following question, which arose from a conjecture of Bienvenu and Geroldinger \cite[Conjecture 4.7]{Bie-Ger-22} and was settled by the authors in \cite{Tri-Yan-23(a)}:

\begin{question}\label{quest:2}
Given a class $\mathcal C$ of monoids, prove or disprove that $\mathcal P_{\fin,1}(H)$ is isomorphic to $\mathcal P_{\fin,1}(K)$, for some $H, K \in \mathcal C$, if and only if $H$ is isomorphic to $K$.
\end{question}

To set the stage, let $\aut(M)$ be the automorphism group of a monoid $M$. It is not difficult to show (see \cite[Remark 1.1]{Tri-Yan-23(a)}) that, for each $f \in \aut(H)$, the function 
$$
\mathcal P_{\fin,1}(H) \to \mathcal P_{\fin,1}(H) \colon X \mapsto f[X] := \{f(x) \colon x \in X\}
$$
is an automorphism of $\mathcal P_{\fin,1}(H)$, henceforth referred to as the \evid{augmentation} of $f$. Accordingly, we call an auto\-mor\-phism of $\mathcal P_{\fin,1}(H)$ \evid{inner} if it is the augmentation of an automorphism of $H$.

So, we have a well-defined map $\Phi \colon \mathrm{Aut}(H) \to \mathrm{Aut}(\mathcal P_{\fin,1}(H))$ sending an automorphism of $H$ to its aug\-men\-ta\-tion.
In addition, it is easily checked that
$$
\Phi(g \circ f)(X) = g \circ f[X] = g[f[X]] = g[\Phi(f)(X)] = \Phi(g)(\Phi(f)(X)) = \Phi(g) \circ \Phi(f)(X),
$$
 for all $f, g \in \aut(H)$ and $X \in \mathcal P_{\fin,1}(H)$.
If, on the other hand, $f(x) \ne g(x)$ for some $x \in H$ and we take $X := \{1_H, x\}$, then $\Phi(f)(X) = \{1_H, f(x)\} \ne \{1_H, g(x)\} = \Phi(g)(X)$ and hence $\Phi(f) \ne \Phi(g)$.

In other words, $\Phi$ is an injective (group) homomorphism from $\mathrm{Aut}(H)$ to $\mathrm{Aut}(\mathcal P_{\fin,1}(H))$. It is therefore natural to ask if $\Phi$ is also surjective and hence an iso\-mor\-phism, which is tantamount to saying that every automorphism of $\mathcal P_{\fin,1}(H)$ is inner. Unsurprisingly, this is not generally the case and determining $\mathrm{Aut}(\mathcal P_{\fin,1}(H))$ is usually much harder than determining $\mathrm{Aut}(H)$, as we are going to demonstrate in the basic case where $H$ is the additive monoid $(\mathbb N, +)$ of non-negative integers. 

More precisely, let us denote by $\mathcal P_{\fin,0}(\mathbb N)$ the reduced power monoid of $(\mathbb N, +)$. In contrast to what we have done so far with $\mathcal P_{\fin,1}(H)$, we will write $\mathcal P_{\fin,0}(\mathbb N)$ additively, so that the operation on $\mathcal P_{\fin,0}(\mathbb N)$ maps a pair $(X, Y)$ of finite subsets of $\mathbb N$ containing $0$ to their sumset 
$$
X + Y := \{x+y \colon x \in X, \, y \in Y\}.
$$
We establish in Theorem \ref{thm:characterization} that an endomorphism of $\mathcal P_{{\rm fin},0}(\mathbb N)$ is an automorphism if and only if it is surjective, if and only if it is injective and sends $\{0, 1\}$ to $\{0, 1\}$. Then, in Theorem \ref{thm:main}, we prove that the only non-trivial automorphism of $\mathcal P_{{\rm fin},0}(\mathbb N)$ is the involution $X \mapsto \allowbreak \max X - \allowbreak X$. Since the automorphism group of $(\mathbb N, +)$ is trivial, this shows in particular that the automorphisms of $\mathcal P_{\fin,0}(\mathbb N)$ are not all inner. The proof of the latter result proceeds through an unconventional induction on
what we call the \evid{boxing dimension} of a finite set of integers, i.e., the smallest number of intervals whose union is the set itself.

We have striven to make our arguments as elementary and self-contained as possible, up to a classical theorem of Nathanson \cite[Theorem 1.1]{Nat1996}, sometimes called the \emph{Fundamental Theorem of Additive Number Theory} \cite[p.~3]{Nat2010}, that enters the proof of Lemma \ref{lem:FTAC-decomposition}. We propose directions for further research in Sect.~\ref{sec:conclusions}.

\subsection*{Notation and terminology.} We denote by $\mathbb N$ the (set of) non-negative integers, by $\mathbb N^+$ the positive integers, and by $|\cdot|$ the cardinality of a set. If $a, b \in \mathbb Z \cup \{\pm \infty\}$, we let $\llb a, b \rrb := \{x \in \allowbreak \mathbb Z \colon \allowbreak a \le x \le b\}$ be the (\evid{discrete}) \evid{interval} from $a$ to $b$. Given $h \in \mathbb N$ and $k \in \mathbb Z$, we use $hX$ for the $h$-fold sum and $k \times X := \allowbreak \{kx \colon x \in X\}$ for the \evid{$k$-dilate}
of a set $X \subseteq \mathbb Z$, resp. Moreover, we put $-X := -1 \times X$ and $X^+ := X \cap \mathbb N^+$. 
Further notation and ter\-mi\-nol\-o\-gy, if not explained, are standard or should be clear from the context.

\section{Preliminaries}
\label{sec:preliminaries}

In this section, we work out a variety of preliminaries that will be used in Sect.~\ref{sect:3} to prove Theorem \ref{thm:main} (that is, the main result of the paper). 
We start with a short list of remarks.

\begin{remarks}
\label{rem:autos}
\begin{enumerate*}[label=\textup{(\arabic{*})}, mode=unboxed]
\item\label{rem:autos(1)} Let $H$ and $K$ be (multiplicatively written) monoids. A \emph{semigroup} homomorphism from $H$ to $K$ is, by definition, a function $f$ from $H$ to $K$ that satisfies the functional equation:
\begin{equation}\label{cauchy-functional-eq}
f(xy) = f(x) f(y), \qquad
\text{for all }
x, y \in H.
\end{equation}
A solution to Eq.~\eqref{cauchy-functional-eq} will send the neutral element $1_H$ of $H$ to an i\-dem\-po\-tent of $K$. So, if the only i\-dem\-po\-tent of $K$ is the neutral element $1_K$, then any semigroup ho\-mo\-mor\-phism from $H$ to $K$ is also a \emph{monoid} ho\-mo\-mor\-phism (namely, it maps $1_H$ to $1_K$). This happens, in particular, when $K$ is the reduced power monoid of $(\mathbb N, +)$. In fact, if $2X = X$ for some $X \in \mathcal P_{\fin,0}(\mathbb N)$, then $2 \max X = \max X$ and hence $\max X = 0$, which forces $X$ to be the singleton $\{0\}$ (i.e., the neutral element of $\mathcal P_{\fin,0}(\mathbb N)$). 
\end{enumerate*}

\vskip 0.05cm

\begin{enumerate*}[label=\textup{(\arabic{*})}, mode=unboxed, resume]
\item\label{rem:autos(2)}
The only automorphism of $(\mathbb N, +)$ is the identity function on $\mathbb N$, because $(\mathbb N, +)$ is a cyclic monoid generated by $1$ and isomorphisms map generators to generators. Yet, the automorphism group of $\mathcal P_{\fin,0}(\mathbb N)$ is non-trivial, for it contains the \evid{reversion map} 
$$
\textrm{rev} \colon \mathcal P_{\fin,0}(\mathbb N) \to \mathcal P_{\fin,0}(\mathbb N) \colon X \mapsto \max X - X.
$$
Indeed, $\mathrm{rev}(\cdot)$ is an involution (or self-inverse function); and it is clear that, for all $X, Y \in \mathcal P_{\fin,0}(\mathbb N)$,
$$
\mathrm{rev}(X+Y) = \max(X + Y) - (X + \allowbreak Y) = \allowbreak (\max X - \allowbreak X) + \allowbreak (\max Y - \allowbreak Y) = \mathrm{rev}(X) + \allowbreak \mathrm{rev}(Y).
$$
Since 
$\mathrm{rev}(\{0, 1, 3\}) = \{0, \allowbreak 2, \allowbreak 3\}$, we thus get from item \ref{rem:autos(1)} that $\mathrm{rev}(\cdot)$ is a non-trivial automorphism of $\mathcal P_{\fin,0}(\mathbb N)$. We will see in Theorem \ref{thm:main} that $\mathcal P_{\fin,0}(\mathbb N)$ has no other non-trivial automorphism.
\end{enumerate*}

\vskip 0.05cm

\begin{enumerate*}[label=\textup{(\arabic{*})}, mode=unboxed, resume]
\item\label{rem:autos(3)} Let $H$ be a (multiplicatively written) monoid. If $x$ and $y$ are fixed points of an endomorphism $f$ of $H$, then so is, of course, the product $xy$. Thus, the set of fixed points of $f$ is a submonoid of $H$.
\end{enumerate*}
\end{remarks}

We continue with a series of lemmas that will be crucial in the subsequent analysis. The first of these lemmas is essentially a generalization of \cite[Lemma 2.2]{Tri-Yan-23(a)}.

\begin{lemma}\label{lem:FTAC-decomposition}
If $A \in \mathcal P_{\fin,0}(\mathbb N)$ and $\{0, \max A\} \subseteq B \subseteq A$, then $(k+1)A = kA + B$ for all large $k \in \mathbb N$.
\end{lemma}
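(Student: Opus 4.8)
The plan is to strip the statement down to a single clean case and then invoke Nathanson's structure theorem. Write $m := \max A$. The inclusion $kA + B \subseteq (k+1)A$ is immediate from $B \subseteq A$, for every $k$. For the reverse inclusion, note that since $\{0,m\} \subseteq B \subseteq A$, once we know $(k+1)A = kA + \{0, m\}$ for all large $k$ the chain
$$
kA + \{0,m\} \;\subseteq\; kA + B \;\subseteq\; kA + A \;=\; (k+1)A \;=\; kA + \{0,m\}
$$
squeezes every inclusion into an equality, yielding $(k+1)A = kA + B$. So it suffices to prove $(k+1)A = kA + \{0,m\}$ for large $k$. This reduced statement is invariant under dilation: if $d := \gcd(A^+)$ and $A = d \times A'$, then $(k+1)A = d \times (k+1)A'$ and $kA + \{0,m\} = d \times (kA' + \{0, \max A'\})$, so it holds for $A$ if and only if it holds for $A'$. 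We may therefore assume $\gcd(A^+) = 1$; and as the case $A = \{0\}$ is trivial, we may assume $m \ge 1$.

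I would then appeal to the Fundamental Theorem of Additive Combinatorics \cite{Nat78} in its structural form: there exist a threshold $k_0 \in \mathbb N$, constants $b, c \in \mathbb N$, and finite sets $C \subseteq \llb 0, b\rrb$ and $D \subseteq \llb 0, c\rrb$, all independent of $k$, such that
$$
kA = C \;\cup\; \llb b,\, km - c\rrb \;\cup\; (km - D), \qquad \text{for all } k \ge k_0.
$$
Thus $kA$ splits into a stable ``head'' $C$ near $0$, a solid ``body'' that is a full interval, and a stable ``tail'' $km - D$ near the top $km$, with $C$ and $D$ not depending on $k$. Applying the same decomposition with $k+1$ in place of $k$ describes $(k+1)A$ with the very same sets $C$ and $D$.

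Finally I would run the covering argument. Fix $k$ large enough that $k, k+1 \ge k_0$ and $b + m \le km - c + 1$ (possible since $m \ge 1$). Translating the decomposition of $kA$ by $m$ gives
$$
kA + m = (C + m) \;\cup\; \llb b + m,\, (k+1)m - c\rrb \;\cup\; \big((k+1)m - D\big),
$$
so the union $kA \cup (kA + m)$ contains $C$ (from $kA$), the merged interval $\llb b, km - c\rrb \cup \llb b+m, (k+1)m - c\rrb = \llb b, (k+1)m - c\rrb$ (the two overlap by the choice of $k$), and the tail $(k+1)m - D$ (from $kA + m$). These are exactly the head, body, and tail of $(k+1)A$, whence $(k+1)A \subseteq kA \cup (kA + m) \subseteq kA + \{0, m\}$, the inclusion we needed. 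The one delicate point, which I would write out with care, is the bookkeeping that the head set $C$ and tail set $D$ in the decomposition of $(k+1)A$ coincide with those of $kA$ — so that the head of $(k+1)A$ lands inside $kA$ and its tail inside $kA + m$. This is precisely the $k$-independence furnished by Nathanson's theorem, and it is where the whole argument hinges.
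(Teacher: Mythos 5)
Your proposal is correct and follows essentially the same route as the paper: reduce to $B = \{0, \max A\}$, normalize to $\gcd A = 1$ by dilation, invoke Nathanson's structure theorem, and then observe that for $k$ large the head, body, and tail of $(k+1)A$ are covered by $kA \cup (kA + \max A)$. The paper phrases the final step element-by-element (casing on where a given $x \in (k+1)A$ sits in the decomposition) rather than set-by-set, but the content — including the crucial $k$-independence of the head and tail sets — is identical.
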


\begin{proof}
Set $a := \max A$ and $q := \gcd A$. Since $kA + \{0, a\} \subseteq kA + B \subseteq (k+1)A$ for every $k \in \mathbb N$, it suffices to show that $(k+1)A \subseteq kA + \{0, a\}$ when $k$ is large enough. We may also suppose $A \ne \{0\}$, or else $kA = \allowbreak \{0\}$ for all $k \in \mathbb N$ and the conclusion is obvious. Accordingly, $q$ is a positive integer and there is no loss of generality in as\-sum\-ing that $q = 1$, because taking $\hat{A} := \{x/q \colon x \in A\} \subseteq \mathbb N$ and $\hat{a} := \max \hat{A}$ yields (i) $\gcd \hat{A} = \allowbreak 1$ and (ii) $(k+1)A \subseteq kA + \{0, a\}$, for some $k \in \mathbb N$, if and only if $(k+1)\hat{A} \subseteq k\hat{A} + \{0,\hat{a}\}$.

Consequently, we gather from the Fundamental Theorem of Additive Number Theory \cite[Theorem 1.1]{Nat1996} that there exist $c, d, k_0 \in \mathbb N$ and sets $C \subseteq \llb 0, c-2 \rrb$ and $D \subseteq \llb 0, d-2\rrb$ such that 
\begin{equation}\label{equ:FTAC-applied}
kA = C \cup \llb c, ka - d \, \rrb \cup (ka - D),
\qquad
\text{for each integer } k \ge k_0.
\end{equation}
Now, fix an integer $h \ge \max \{k_0,  1+(c+d)/a\}$, and let $x \in (h + 1)A$. 
Since $h \ge k_0$, Eq.~\eqref{equ:FTAC-applied} holds for both $k = h$ and $k = h+1$. So, either $x \in C \cup \llb c, ha - d - 1 \rrb$, and it is then clear that $x \in hA$; or $x - \allowbreak a \in \allowbreak \llb (h-1)a - d, ha - d \, \rrb \cup (ha - D) \subseteq hA$ and then $x \in hA + a$, where we have especially used that $h \ge \allowbreak 1 + \allowbreak (c+d)/a$ and hence $(h-1)a - d \ge c$. In both cases, the conclusion is that $x \in hA + \{0, a\}$, which finishes the proof because $x$ is an arbitrary element in $(h+1)A$.
\end{proof}

\begin{lemma}\label{lem:2-element-sets-to-2-elements-sets}
Any automorphism of $\mathcal P_{\fin,0}(\mathbb N)$ sends $2$-element sets to $2$-element sets.
\end{lemma}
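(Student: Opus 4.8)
The plan is to establish the stronger fact that every automorphism $f$ of $\mathcal P_{\fin,0}(\mathbb N)$ \emph{fixes} each $2$-element set, which of course implies the statement. The engine of the argument is the observation that $\max \colon \mathcal P_{\fin,0}(\mathbb N) \to (\mathbb N, +)$, $X \mapsto \max X$, is a \emph{surjective} monoid homomorphism onto a \emph{cancellative} monoid (indeed $\max(X+Y) = \max X + \max Y$), which I would combine with the stabilization furnished by Lemma \ref{lem:FTAC-decomposition}.

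First I would show that $f$ preserves the maximum, i.e.\ $\max f(X) = \max X$ for all $X$. Applying Lemma \ref{lem:FTAC-decomposition} with $B = \{0, \max A\}$ gives $(k+1)A = kA + \{0, \max A\}$ for all large $k$; pushing this through the homomorphism $f$ yields $(k+1)f(A) = k\,f(A) + f(\{0, \max A\})$, and taking $\max$ and cancelling in $\mathbb N$ shows that $\max f(A)$ depends only on $\max A$. Writing $\max f(A) = \psi(\max A)$ and using that $\max$ is a surjective homomorphism, the function $\psi \colon \mathbb N \to \mathbb N$ satisfies Cauchy's equation $\psi(s+t) = \psi(s) + \psi(t)$, whence $\psi(n) = cn$ with $c := \max f(\{0,1\}) \ge 1$ (note $f(\{0,1\}) \ne \{0\}$ by injectivity). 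Running the same argument for $f^{-1}$ and composing forces $c = 1$, so $\max f = \max$.

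With max-preservation in hand, fix $n \in \mathbb N^+$ and put $A_0 := f^{-1}(\{0, n\})$, so that $\max A_0 = \max f(A_0) = n$. Feeding $A_0$ into Lemma \ref{lem:FTAC-decomposition} and transporting by $f$ gives, for all large $k$,
\[
(k+1)\{0,n\} = k\{0,n\} + f(\{0, \max A_0\}) = k\{0,n\} + f(\{0,n\}).
\]
The left-hand side is $\{0, n, 2n, \ldots, (k+1)n\}$, a set of multiples of $n$; since $0 \in f(\{0,n\})$, every element of $f(\{0,n\})$ must itself be a multiple of $n$, while comparing maxima gives $\max f(\{0,n\}) = n$. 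Hence $f(\{0,n\}) = \{0,n\}$, as desired.

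The main obstacle is that $\mathcal P_{\fin,0}(\mathbb N)$ is highly non-cancellative, so one cannot simply cancel the bulky factor $kA$ from the stabilized identities. The purpose of routing everything through $\max$ is precisely to transfer cancellation to $(\mathbb N, +)$, where it is available; the remaining rigidity is then extracted in the last step from the elementary mod-$n$ divisibility of $k\{0,n\}$. I expect the surjectivity of $f$ (used to produce a preimage $A_0$ of the right maximum) and the stabilization of Lemma \ref{lem:FTAC-decomposition} to be the two indispensable ingredients.
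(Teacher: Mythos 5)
Your argument is correct, and it in fact proves more than the lemma asks: it shows that every automorphism \emph{fixes} each two-element set, a statement the paper only reaches later (Corollary \ref{cor:fixed-pts}\ref{cor:fixed-pts(i)}) as a consequence of this lemma together with Theorem \ref{thm:characterization}. The shared engine is Lemma \ref{lem:FTAC-decomposition} with $B = \{0, \max A\}$ plus the rigidity of the arithmetic progression $k\{0,n\}$; the difference lies in the direction in which the stabilized identity is transported. The paper applies the lemma to the \emph{image} $B = \phi(\{0,a\})$ and pulls back through $\phi^{-1}$, landing on $(k+1)\{0,a\} = k\{0,a\} + \phi^{-1}(\{0,\max B\})$, from which $\phi^{-1}(\{0,\max B\}) = \{0,a\}$ and hence $|B| = 2$ follow at once --- no max-preservation is needed there, since $\max \phi^{-1}(\{0,\max B\}) = a$ drops out of the identity itself. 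You instead apply the lemma to the \emph{preimage} $A_0 = f^{-1}(\{0,n\})$ and push forward, which obliges you to first prove $\max f = \max$ so that $\{0, \max A_0\} = \{0,n\}$; your Cauchy-equation argument for that step is sound and self-contained, and it anticipates Lemma \ref{lem:endos}\ref{lem:endos(ii)} and Corollary \ref{cor:fixed-pts}\ref{cor:fixed-pts(iii)}, reproved via Lemma \ref{lem:FTAC-decomposition} rather than via the identity $X + k\{0,1\} = (k + \max X)\{0,1\}$ used in the paper. The price of your route is length; what it buys is the stronger fixed-point conclusion in one stroke. One cosmetic slip: the reason every element of $f(\{0,n\})$ lies in $(k+1)\{0,n\}$ is that $0 \in k\{0,n\}$, not that $0 \in f(\{0,n\})$; the containment, and hence the conclusion, are unaffected.
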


\begin{proof}
Let $\phi$ be an automorphism of $\mathcal P_{\fin,0}(\mathbb N)$ and fix $a \in \mathbb N^+$. We need to show that $B := \phi(\{0, \allowbreak a\}) = \allowbreak \{0, b\}$ for a certain $b \in \mathbb N^+$. 
For, set $n := |B| - 1$. Clearly, $n$ is a positive integer, because $0 \in B$ and $B \ne \phi(\{0\}) = \{0\}$ (by the injectivity of $\phi$). Accordingly, let $b_1, \ldots, b_n$ be an enumeration of the positive elements of $B$ with $b_n = \max B$. By Lemma \ref{lem:FTAC-decomposition}, we have that $
(k+1) B = kB + \{0, b_n\}$ for some $k \in \mathbb N$.

Put $A := \phi^{-1}(\{0, b_n\})$, where $\phi^{-1}$ is the functional inverse of $\phi$. Since $\phi^{-1}$ is an automorphism of $\mathcal P_{\fin,0}(\mathbb N)$ with $\phi^{-1}(B) = \{0, a\}$, we get from the above that
$$
(k+1)\{0, a\} = (k+1) \phi^{-1}(B) = k \phi^{-1}(B) + \phi^{-1}(\{0, b_n\}) = k \{0, a\} + A.
$$
It follows that $\{0\} \subsetneq A \subseteq (k+1) \{0, a\}$ and $\max A = (k+1) a - ka = a$. Considering that $a$ is the least non-zero element of $(k+1) \{0, a\}$, this yields $A = \{0, \allowbreak a\}$ and hence $B = \allowbreak \phi(\{0, a\}) = \phi(A) = \{0, b_n\}$.
\end{proof}

\begin{lemma}\label{lem:endos}
Let $f$ be an endomorphism of $\mathcal P_{\fin,0}(\mathbb N)$, and let $X \in \mathcal P_{\fin,0}(\mathbb N)$. The following hold:
\begin{enumerate}[label=\textup{(\roman{*})}]
\item\label{lem:endos(i)} $f(X) + k f(\{0, 1\}) = (k + \max X) f(\{0, 1\})$ for every integer $k \ge \max X$.
\item\label{lem:endos(ii)}  $\max f(X) = \max X \cdot \max f(\{0, 1\})$.
\end{enumerate}
\end{lemma}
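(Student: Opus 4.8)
The plan is to reduce both statements to a single purely combinatorial identity in the monoid $\mathcal P_{\fin,0}(\mathbb N)$ and then feed it through the homomorphism $f$. The identity I would isolate first is
$$
X + k\{0,1\} = (k + \max X)\,\{0,1\}, \qquad \text{for every integer } k \ge \max X.
$$
To prove it, write $m := \max X$ and note that $k\{0,1\} = \llb 0, k\rrb$ and $(k+m)\{0,1\} = \llb 0, k+m\rrb$, so the claim becomes $X + \llb 0,k\rrb = \llb 0, k+m\rrb$. The inclusion ``$\subseteq$'' is immediate from $X \subseteq \llb 0,m\rrb$, since then $X + \llb 0,k\rrb \subseteq \llb 0,m\rrb + \llb 0,k\rrb = \llb 0, k+m\rrb$. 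For ``$\supseteq$'', I would use that both $0$ and $m$ lie in $X$: this gives $\llb 0,k\rrb = \{0\} + \llb 0,k\rrb$ and $\llb m, k+m\rrb = \{m\} + \llb 0,k\rrb$ as subsets of $X + \llb 0,k\rrb$, and their union is all of $\llb 0,k+m\rrb$ precisely because $k \ge m$ forces the two intervals to overlap.

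With the identity in hand, part \ref{lem:endos(i)} is just an application of $f$: since $f$ is a monoid homomorphism we have $f(k\{0,1\}) = k\,f(\{0,1\})$, so applying $f$ to both sides of the identity yields $f(X) + k\,f(\{0,1\}) = (k+\max X)\,f(\{0,1\})$ for every $k \ge \max X$, which is exactly the asserted equation.

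For part \ref{lem:endos(ii)}, I would simply take maxima on both sides of the equation in \ref{lem:endos(i)}, using the elementary fact that $\max(A+B) = \max A + \max B$ for all $A, B \in \mathcal P_{\fin,0}(\mathbb N)$. Writing $M := \max f(\{0,1\})$, the left-hand side has maximum $\max f(X) + kM$ while the right-hand side has maximum $(k+\max X)\,M = kM + \max X \cdot M$; cancelling the common term $kM$ leaves $\max f(X) = \max X \cdot M$, as claimed.

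There is no serious obstacle here: the whole argument is a short computation, and the only genuine step is recognizing the monoid identity above, which encodes the fact that any $X$ with $\max X = m$ becomes a full interval after adding a sufficiently long interval $\llb 0,k\rrb$. Everything else is forced by the homomorphism property and the additivity of $\max$ over sumsets.
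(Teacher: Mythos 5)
Your proof is correct and follows essentially the same route as the paper: establish the sumset identity $X + \llb 0,k\rrb = \llb 0, k+\max X\rrb$ for $k \ge \max X$ by covering $\llb 0,k+\max X\rrb$ with the two overlapping translates $\llb 0,k\rrb$ and $\max X + \llb 0,k\rrb$, then push it through $f$ for (i) and take maxima for (ii). No gaps.
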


\begin{proof}
\ref{lem:endos(i)} Let $k \in \mathbb N$. We have $\{0, \max X\} \subseteq X \subseteq \llb 0, \max X \rrb$ and $h \,\{0, 1\} = \llb 0, h \rrb$ for all $h \in \mathbb N$. It is therefore seen that, for $k \ge \max X$, 
$$
\llb 0, k + \max X \rrb = \llb 0, k \rrb \cup (\max X + \llb 0, k \rrb) \subseteq X + k \, \{0, 1\} \subseteq \llb 0, k + \max X \rrb,
$$
which ultimately shows that
$$
X + k \,\{0, 1\} = \llb 0, k + \max X \rrb = (k + \max X) \{0, 1\}.
$$
Since $f(A + B) = f(A) + f(B)$ for all $A, B \in \mathcal P_{\fin,0}(\mathbb N)$, this suffices to conclude.

\vskip 0.05cm

\ref{lem:endos(ii)} 
Using that $\max (A + B) = \max A + \max B$ for all $A, B \in \mathcal P_{\fin,0}(\mathbb N)$, we get from item \ref{lem:endos(i)} that
\begin{equation*}
\label{equ:maxima}
\max f(X) + k  \cdot \max f(\{0, 1\}) = (k + \max X) \cdot \max f(\{0, 1\})
\end{equation*}
for every integer $k \ge \max X$. Clearly, this is enough to finish the proof.
\end{proof}

Our first theorem gives a characterization of the automorphisms of $\mathcal P_{\fin,0}(\mathbb N)$ that is of independent interest and complementary to the main result (Theorem \ref{thm:main}).

\begin{theorem}\label{thm:characterization}
The following are equivalent for an endomorphism $f$ of $\mathcal P_{\fin,0}(\mathbb N)$:
\begin{enumerate}[label=\textup{(\alph{*})}]
\item\label{prop:characterization(a)} $f$ is injective and $f(\{0, 1\}) = \{0, 1\}$.
\item\label{prop:characterization(b)} $f$ is surjective.
\item\label{prop:characterization(c)} $f$ is an automorphism.
\end{enumerate}
\end{theorem}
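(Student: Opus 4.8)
The plan is to establish the cyclic chain $\ref{prop:characterization(a)} \Rightarrow \ref{prop:characterization(c)} \Rightarrow \ref{prop:characterization(b)} \Rightarrow \ref{prop:characterization(a)}$, the middle step $\ref{prop:characterization(c)} \Rightarrow \ref{prop:characterization(b)}$ being immediate since an automorphism is in particular surjective. The two substantial implications rest on a single observation: for each $n \in \mathbb N$, the ``layer'' $L_n := \{X \in \mathcal P_{\fin,0}(\mathbb N) : \max X = n\}$ is a \emph{finite} set, because its members are subsets of $\llb 0, n \rrb$ containing both $0$ and $n$, so that $|L_n| = 2^{n-1}$ for $n \ge 1$ (and $L_0 = \{\{0\}\}$). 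Writing $\alpha := \max f(\{0,1\})$, Lemma \ref{lem:endos}\ref{lem:endos(ii)} gives $\max f(X) = \alpha \cdot \max X$ for every $X$; hence, once we know $\alpha = 1$, the endomorphism $f$ preserves maxima and therefore restricts to a self-map $f|_{L_n} \colon L_n \to L_n$ of each finite layer, recalling that $0 \in f(X)$ automatically because $f$ lands in $\mathcal P_{\fin,0}(\mathbb N)$. Since a self-map of a finite set is injective if and only if it is surjective, this pigeonhole principle is the engine that converts each of the hypotheses \ref{prop:characterization(a)} and \ref{prop:characterization(b)} into full bijectivity.

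For $\ref{prop:characterization(a)} \Rightarrow \ref{prop:characterization(c)}$, the hypothesis $f(\{0,1\}) = \{0,1\}$ forces $\alpha = 1$ outright, so $f$ preserves maxima and each $f|_{L_n}$ is an injection of the finite set $L_n$ into itself, hence a bijection. As $f(\{0\}) = \{0\}$ and the layers $L_n$ partition $\mathcal P_{\fin,0}(\mathbb N)$, the map $f$ is a bijective endomorphism; its set-theoretic inverse is then automatically a monoid homomorphism, so $f$ is an automorphism.

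For $\ref{prop:characterization(b)} \Rightarrow \ref{prop:characterization(a)}$, I would first extract $\alpha = 1$ from surjectivity: choosing $Z$ with $f(Z) = \{0,1\}$ and invoking Lemma \ref{lem:endos}\ref{lem:endos(ii)} yields $1 = \alpha \cdot \max Z$ in $\mathbb N$, which forces $\alpha = \max Z = 1$; since then $\max f(\{0,1\}) = 1$ and $0 \in f(\{0,1\}) \subseteq \llb 0, 1 \rrb$, we obtain $f(\{0,1\}) = \{0,1\}$. Now $f$ preserves maxima, so global surjectivity restricts to surjectivity of each $f|_{L_n} \colon L_n \to L_n$ (any preimage of a $Y \in L_n$ must itself lie in $L_n$); finiteness of $L_n$ upgrades this to injectivity of each $f|_{L_n}$, whence $f$ is injective. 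Together with $f(\{0,1\}) = \{0,1\}$, this is precisely \ref{prop:characterization(a)}.

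The only point requiring genuine care is pinning down $\alpha = 1$, which is exactly where Lemma \ref{lem:endos}\ref{lem:endos(ii)} does the decisive work; once maxima are known to be preserved, the rest is a clean finiteness argument. I anticipate no serious obstacle beyond correctly identifying the layers $L_n$ as finite and $f$-invariant, and recording that a bijective monoid endomorphism is automatically an isomorphism.
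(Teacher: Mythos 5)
Your proposal is correct and follows essentially the same route as the paper: both arguments hinge on Lemma \ref{lem:endos}\ref{lem:endos(ii)} to show that $f$ preserves maxima and then apply the pigeonhole principle to the finite family of sets with a fixed maximum (your layers $L_n$ are exactly the paper's $\mathscr S_{\llb 0,n\rrb}$). The only cosmetic difference is that you extract $\max f(\{0,1\})=1$ from surjectivity by taking a preimage of $\{0,1\}$ directly, whereas the paper rules out the cases $h=0$ and $h\ge 2$; both are equally valid.
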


\begin{proof}
Obviously, ``\ref{prop:characterization(a)} and \ref{prop:characterization(b)}'' $\Rightarrow$ \ref{prop:characterization(c)} $\Rightarrow$ \ref{prop:characterization(b)}. So, it suffices to show that \ref{prop:characterization(a)} $\Rightarrow$ \ref{prop:characterization(b)} $\Rightarrow$ \ref{prop:characterization(a)}. For, denote by $\mathscr S_A$ the family of all subsets $B$ of a set $A \in \allowbreak \mathcal P_{\fin,0}(\mathbb N)$ such that $\{0, \allowbreak \max A\} \subseteq \allowbreak B$. We will use without further comment that, since $\mathscr S_A$ is a finite family, a function $\mathscr S_A \to \mathscr S_A$ is injective if and only if it is surjective.

\vskip 0.05cm

\ref{prop:characterization(a)} $\Rightarrow$ \ref{prop:characterization(b)}: 
Let $Y \in \mathcal P_{\fin,0}(\mathbb N)$, and put $k := \max Y$ and $\mathscr S_k := \mathscr S_{\llb 0, k \rrb}$. We need to check that $Y = f(X)$ for some $X \in \allowbreak \mathcal P_{\fin,0}(\mathbb N)$. Considering that $f(\{0, \allowbreak 1\}) = \allowbreak \{0, 1\}$ (by hypothesis), we have from Lemma \ref{lem:endos}\ref{lem:endos(ii)} that, for every $X \in \mathscr S_k$, $\max f(X) = \max X = k$ and hence $f(X) \in \mathscr S_k$. Thus, the restriction of $f$ (which is supposed to be injective) to the family $\mathscr S_k$ results in a well-defined injection $f_k$ from $\mathscr S_k$ into it\-self. It follows that $f_k$ is bijective and, since $Y \in \mathscr S_k$, there exists $X \in \mathscr S_k$ such that $Y = \allowbreak f_k(X) = \allowbreak f(X)$.

\vskip 0.05cm

\ref{prop:characterization(b)} $\Rightarrow$ \ref{prop:characterization(a)}: Set $h := \max f(\{0, 1\}) \in \mathbb N$. By Lemma \ref{lem:endos}\ref{lem:endos(ii)}, $\max f(A) = h \max A$ for each $A \in \allowbreak \mathcal P_{\fin,0}(\mathbb N)$. So, the sur\-jec\-tivity of $f$ implies $h = \allowbreak 1$ and $f(\{0, 1\}) = \{0, 1\}$; in fact, if $h = 0$ then $f$ is identically zero, and if $h \ge 2$ then $\max f(A) \ne \allowbreak 1$ for all $A \in \allowbreak \mathcal P_{\fin,0}(\mathbb N)$. It remains to show that $f$ is also injective.

Assume $f(X) = f(Y)$ for some $X, Y \in \mathcal P_{\fin,0}(\mathbb N)$. We need to verify that $X = Y$. Set $k := \max f(X)$ and $\mathscr S_k := \mathscr S_{\llb 0, k \rrb}$. It follows from the above that $\max f(A) = \max A = k$ for all $A \in \mathscr S_k$. Consequently, the sets $X$ and $Y$ are both in $\mathscr S_k$, and the restriction of $f$ to $\mathscr S_k$ yields a well-defined surjection $f_k$ from $\mathscr S_k$ onto itself. So, similarly as in the proof of the implication \ref{prop:characterization(a)} $\Rightarrow$ \ref{prop:characterization(b)}, $f_k$ is a bijection and, since $f_k(X) = \allowbreak f(X) = f(Y) = f_k(Y)$, we conclude that $X = Y$.
\end{proof}

\begin{corollary}\label{cor:fixed-pts}
For an automorphism $f$ of $\mathcal P_{\fin,0}(\mathbb N)$, the following hold:
\begin{enumerate}[label=\textup{(\roman{*})}]
\item\label{cor:fixed-pts(iii)} $\max X = \max f(X)$ for every $X \in \mathcal P_{\fin,0}(\mathbb N)$.
\item\label{cor:fixed-pts(i)} $\{0, k\}$ and $\llb 0, k \rrb$ are fixed points of $f$ for all $k \in \mathbb N$.
\item\label{cor:fixed-pts(iv)} Either $f(\{0, 2, 3\}) = \{0, 1, 3\}$ or $f(\{0, 2, 3\}) = \{0, 2, 3\}$.
\end{enumerate}
\end{corollary}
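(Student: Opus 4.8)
The plan is to read off all three items from the preliminary results, the single key input being that an automorphism fixes $\{0, 1\}$. For item \ref{cor:fixed-pts(iii)}, since $f$ is an automorphism, Theorem \ref{thm:characterization} (an automorphism is in particular surjective, hence satisfies condition \ref{prop:characterization(a)}) yields $f(\{0, 1\}) = \{0, 1\}$, whence $\max f(\{0, 1\}) = 1$. Substituting this into Lemma \ref{lem:endos}\ref{lem:endos(ii)} gives $\max f(X) = \max X \cdot 1 = \max X$ for every $X \in \mathcal P_{\fin,0}(\mathbb N)$, which is exactly the claim.

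For item \ref{cor:fixed-pts(i)} I would treat the two families separately. Every set in $\mathcal P_{\fin,0}(\mathbb N)$ contains $0$, so $0 \in f(\{0, k\})$; by Lemma \ref{lem:2-element-sets-to-2-elements-sets}, the image $f(\{0, k\})$ is a $2$-element set, hence of the form $\{0, m\}$ with $m \in \mathbb N^+$; and item \ref{cor:fixed-pts(iii)} forces $m = \max f(\{0, k\}) = \max \{0, k\} = k$, so $f(\{0, k\}) = \{0, k\}$. For the interval I would use the identity $\llb 0, k \rrb = k \, \{0, 1\}$ (already recorded in the proof of Lemma \ref{lem:endos}) together with the fact that $f$ is a homomorphism fixing $\{0, 1\}$, so that $f(\llb 0, k \rrb) = k \, f(\{0, 1\}) = k \, \{0, 1\} = \llb 0, k \rrb$.

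For item \ref{cor:fixed-pts(iv)}, set $X := \{0, 2, 3\}$. By item \ref{cor:fixed-pts(iii)} we have $\max f(X) = 3$, and since every image contains $0$, this gives $\{0, 3\} \subseteq f(X) \subseteq \llb 0, 3 \rrb$; thus $f(X)$ is one of the four sets $\{0, 3\}$, $\{0, 1, 3\}$, $\{0, 2, 3\}$, $\{0, 1, 2, 3\}$, and it remains to discard the first and the last. The candidate $\{0, 3\}$ is ruled out by cardinality: $f^{-1}$ is again an automorphism of $\mathcal P_{\fin,0}(\mathbb N)$, so if $f(X)$ had two elements, then Lemma \ref{lem:2-element-sets-to-2-elements-sets} applied to $f^{-1}$ would force $X = f^{-1}(f(X))$ to have two elements, contradicting $|X| = 3$. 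The candidate $\{0, 1, 2, 3\} = \llb 0, 3 \rrb$ is ruled out by item \ref{cor:fixed-pts(i)} together with injectivity: since $\llb 0, 3 \rrb$ is a fixed point, the equality $f(X) = \llb 0, 3 \rrb = f(\llb 0, 3 \rrb)$ would give $X = \llb 0, 3 \rrb$, which is false. Hence $f(X) \in \{\{0, 1, 3\}, \{0, 2, 3\}\}$.

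Since every step is an immediate consequence of a result already in hand, there is no genuine obstacle here; the only point demanding a little care is the case analysis in item \ref{cor:fixed-pts(iv)}, where one must correctly enumerate the admissible images (using both $0 \in f(X)$ and $\max f(X) = 3$) and remember to pass to the automorphism $f^{-1}$ in order to apply Lemma \ref{lem:2-element-sets-to-2-elements-sets} to the cardinality-two case.
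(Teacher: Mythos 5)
Your proposal is correct and follows essentially the same route as the paper: item \ref{cor:fixed-pts(iii)} from Lemma \ref{lem:endos}\ref{lem:endos(ii)} and Theorem \ref{thm:characterization}, item \ref{cor:fixed-pts(i)} from the identity $\llb 0, k \rrb = k\,\{0,1\}$ together with Lemma \ref{lem:2-element-sets-to-2-elements-sets}, and item \ref{cor:fixed-pts(iv)} by sandwiching $f(\{0,2,3\})$ between $\{0,3\}$ and $\llb 0,3 \rrb$ and invoking injectivity. The only (harmless) variations are that you pin down $\max f(\{0,k\})$ via item \ref{cor:fixed-pts(iii)} where the paper computes it from $\{0,k\} + \llb 0,k \rrb = \llb 0,2k \rrb$, and that you exclude the candidate $\{0,3\}$ via Lemma \ref{lem:2-element-sets-to-2-elements-sets} applied to $f^{-1}$ where the paper uses injectivity and the fixed point $\{0,3\}$.
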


\begin{proof}
\ref{cor:fixed-pts(iii)} This is a straightforward consequence of Lemma \ref{lem:endos}\ref{lem:endos(ii)} and Theorem \ref{thm:characterization}.

\vskip 0.05cm

\ref{cor:fixed-pts(i)} 
Let $k \in \mathbb N$. By Theorem \ref{thm:characterization}, we have $f(\{0, 1\}) = \{0, 1\}$. Since $\llb 0, k \rrb = k\, \{0, 1\}$, it is thus clear that $
f(\llb 0, k \rrb) = k f(\{0, 1\}) = \llb 0, k \rrb$; and since $\{0, k\} + \llb 0, k \rrb = \llb 0, 2k \rrb = 2\,\llb 0, k \rrb$, we find that
\begin{equation}\label{equ:image-of-2-element-set}
f(\{0, k\}) + \llb 0, k \rrb = f(\{0, k\} +\llb 0, k \rrb) = 2f(\llb 0, k \rrb) = \llb 0, 2k \rrb.
\end{equation}
In particular, we get from Eq.~\eqref{equ:image-of-2-element-set} that $
\max f( \{0, k\} ) = 2k - k = k$. So, recalling from Lemma \ref{lem:2-element-sets-to-2-elements-sets} that an automorphism of $\mathcal P_{\fin,0}(\mathbb N)$ sends $2$-elements sets to $2$-elements sets, we obtain that $f( \{0, \allowbreak k\} ) = \allowbreak \{0, k\}$.
In other words, we have proved that $\{0, k\}$ and $\llb 0, k \rrb$ are both fixed points of $f$.

\vskip 0.05cm

\ref{cor:fixed-pts(iv)} Set $Y := f(\{0, 2, 3\})$. We gather from item \ref{cor:fixed-pts(i)} that $f(\{0, 3\}) = \{0, 3\}$ and $f(\llb 0, 3 \rrb) = \llb 0, 3 \rrb$, and from item \ref{cor:fixed-pts(iii)} that $\max Y = 3$. By the injectivity of $f$, it follows that $\{0, 3\} \subsetneq Y \subsetneq \llb 0, 3 \rrb$. Therefore, the only possibility is that either $Y = \{0, 1, 3\}$ or $Y = \{0, 2, 3\}$.
\end{proof}

One consequence of Corollary \ref{cor:fixed-pts}\ref{cor:fixed-pts(iii)} is that any automorphism $f$ of $\mathcal P_{\fin,0}(\mathbb N)$ gives rise to a well-defined function $f^\ast \colon \mathcal P_{\fin,0}(\mathbb N) \to \mathcal P_{\fin,0}(\mathbb N)$, henceforth referred to as the \evid{reversal} of $f$, by taking $f^\ast(X) := \max X - \allowbreak f(X)$ for all $X \in \mathcal P_{\fin,0}(\mathbb N)$. 
The next lemma shows that something more is true.

\begin{lemma}\label{lem:reversion}
The reversal of an automorphism of $\mathcal P_{\fin,0}(\mathbb N)$ is itself an automorphism of $\mathcal P_{\fin,0}(\mathbb N)$.
\end{lemma}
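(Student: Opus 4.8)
The plan is to observe that the reversal $f^\ast$ is nothing but the composition of $f$ with the reversion map $\mathrm{rev}(\cdot)$ of Remark~\ref{rem:autos}\ref{rem:autos(2)}; since a composition of automorphisms is again an automorphism, this will settle the claim with essentially no computation.

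To make this precise, I would first invoke Corollary~\ref{cor:fixed-pts}\ref{cor:fixed-pts(iii)}, which gives $\max f(X) = \max X$ for every $X \in \mathcal P_{\fin,0}(\mathbb N)$ and, in passing, guarantees that $f^\ast(X) = \max X - f(X)$ is a well-defined element of $\mathcal P_{\fin,0}(\mathbb N)$ (it contains $\max X - \max f(X) = 0$ and has maximum $\max X - 0 = \max X$, since $0 \in f(X)$). Substituting $\max X = \max f(X)$ into the definition of the reversal yields
\[
f^\ast(X) = \max X - f(X) = \max f(X) - f(X) = \mathrm{rev}(f(X)),
\]
so that $f^\ast = \mathrm{rev} \circ f$ as maps on $\mathcal P_{\fin,0}(\mathbb N)$.

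Because $\mathrm{rev}(\cdot)$ is an automorphism of $\mathcal P_{\fin,0}(\mathbb N)$ (Remark~\ref{rem:autos}\ref{rem:autos(2)}) and $f$ is one by hypothesis, their composition $\mathrm{rev} \circ f$ is an automorphism, which completes the argument.

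I do not anticipate a genuine obstacle here: the entire content is the identification $f^\ast = \mathrm{rev} \circ f$, which hinges only on the equality $\max X = \max f(X)$ already secured by Corollary~\ref{cor:fixed-pts}. Should one prefer to avoid the reversion map, the same conclusion can be reached by a direct check---using $\max(X+Y) = \max X + \max Y$ together with the identity $(a - A) + (b - B) = (a+b) - (A+B)$ to verify that $f^\ast$ is a monoid homomorphism, and then establishing bijectivity---but this route is strictly more laborious, and the slight delicacy of proving bijectivity by hand is precisely what the compositional viewpoint sidesteps.
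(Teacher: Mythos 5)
Your proposal is correct, and the identification $f^\ast = \mathrm{rev} \circ f$ is exactly the right mechanism: Corollary~\ref{cor:fixed-pts}\ref{cor:fixed-pts(iii)} gives $\max f(X) = \max X$, so $f^\ast(X) = \max f(X) - f(X) = \mathrm{rev}(f(X))$, and the claim follows because $\aut(\mathcal P_{\fin,0}(\mathbb N))$ is closed under composition, with $\mathrm{rev}$ already certified as an automorphism in Remark~\ref{rem:autos}\ref{rem:autos(2)}. The paper takes the more laborious route you mention at the end: it verifies the homomorphism property of $f^\ast$ directly (via $\max(X+Y) = \max X + \max Y$ and Remark~\ref{rem:autos}\ref{rem:autos(1)}) and then gets bijectivity from the identity $\max f^\ast(X) - f^\ast(X) = f(X)$ --- which, read correctly, is precisely the statement $\mathrm{rev} \circ f^\ast = f$, i.e., your identification in disguise. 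So the two arguments rest on the same two facts ($\max$-preservation and the automorphy of $\mathrm{rev}$), but yours packages them so that both the homomorphism property and the bijectivity come for free from the composition, whereas the paper's version checks them separately; your presentation is the cleaner of the two and loses nothing.
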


\begin{proof}
Let $f$ be an automorphism of $\mathcal P_{\fin,0}(\mathbb N)$, and let $X, Y \in \mathcal P_{\fin,0}(\mathbb N)$. On account of Remark \ref{rem:autos}\ref{rem:autos(1)}, the reversal $f^\ast$ of $f$ is an endo\-mor\-phism of $\mathcal P_{\fin,0}(\mathbb N)$, as it is readily seen that
$$
f^{\ast}(X+Y) = \max(X + Y) - f(X + Y) = (\max X - f(X)) + (\max Y - f(Y)) = f^\ast(X) + f^\ast(Y).
$$
In addition, it is clear from the definition itself of $f^\ast$ that $\max f^\ast(X) = \max X$ and hence
\begin{equation}\label{equ:involutive-behavior}
f^\ast(f^\ast(X)) = \max f^\ast(X) - f^\ast(X) = \max f(X) - (\max X - f(X)) = f(X),
\end{equation}
which yields $f^\ast \circ f^\ast = f$ and shows that $f^\ast$ is a bijection (since $f$ is so). Hence, $f^\ast \in \aut(\mathcal P_{\fin,0}(\mathbb N))$.
\end{proof}

Given $X \subseteq \mathbb Z$, we denote by $\Delta(X)$ the \evid{gap set} of $X$, i.e., the set of all integers $d \ge 1$ such that $\{x, \allowbreak x + \allowbreak d\} = \allowbreak X \cap \llb x, \allowbreak x + \allowbreak d \rrb$ for some $x \in \mathbb Z$. Accordingly, we define $\maxgap(X) := \allowbreak \sup \Delta(X)$, where $\sup \emptyset := 0$.

\begin{remark}\label{rem:gaps}
It is a simple exercise (and we leave it to the reader) to show that
\begin{equation}\label{equ:11-05-23-1926}
\{0, \max X + h\} \subseteq X + \llb 0, h \rrb \subseteq \llb 0, \max X + h \rrb,
\qquad \text{for all } X \in \mathcal P_{\fin,0}(\mathbb N) \text{ and }
h \in \mathbb N.
\end{equation}
Moreover, the right-most inclusion in Eq.~\eqref{equ:11-05-23-1926} is strict if and only if $h \leq \maxgap(X) - 2$. 
\end{remark}

Note that, in the sequel, we shall often use the observations in Remark \ref{rem:gaps} without further comment.

\begin{lemma}\label{lem:max-delta-preserving}
If $f$ is an automorphism of $\mathcal P_{\fin,0}(\mathbb N)$, then $\maxgap(X) = \maxgap(f(X))$ for all $X \in \mathcal P_{\fin,0}(\mathbb N)$.
\end{lemma}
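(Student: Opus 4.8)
The plan is to characterize $\maxgap$ purely in terms of data that $f$ is already known to preserve, and then transport this characterization through $f$. The relevant data are the maximum, fixed by $f$ thanks to Corollary~\ref{cor:fixed-pts}\ref{cor:fixed-pts(iii)}, and the intervals $\llb 0, h \rrb$, which are fixed points of $f$ by Corollary~\ref{cor:fixed-pts}\ref{cor:fixed-pts(i)}. The key tool is Eq.~\eqref{equ:11-05-23-1926}: for every $Y \in \mathcal P_{\fin,0}(\mathbb N)$ and every $h \in \mathbb N$ one has $Y + \llb 0, h \rrb \subseteq \llb 0, \max Y + h \rrb$, with the inclusion strict exactly when $h \le \maxgap(Y) - 2$; equivalently, $Y + \llb 0, h \rrb = \llb 0, \max Y + h \rrb$ if and only if $h \ge \maxgap(Y) - 1$.

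First I would fix $f \in \aut(\mathcal P_{\fin,0}(\mathbb N))$ and $X \in \mathcal P_{\fin,0}(\mathbb N)$ and record a transfer equivalence. Since $f$ is a monoid homomorphism, $f(X + \llb 0, h \rrb) = f(X) + \llb 0, h \rrb$; and since $f$ fixes both $\llb 0, h \rrb$ and $\llb 0, \max X + h \rrb$ (the latter because $\max f(X) = \max X$), applying the injection $f$ to the two sides of the relevant equality yields, for every $h \in \mathbb N$,
$$
X + \llb 0, h \rrb = \llb 0, \max X + h \rrb \quad \Longleftrightarrow \quad f(X) + \llb 0, h \rrb = \llb 0, \max f(X) + h \rrb.
$$

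Then I would feed the characterization above into this equivalence. It shows that the set $\{h \in \mathbb N \colon X + \llb 0, h \rrb = \llb 0, \max X + h \rrb\}$ and its analogue for $f(X)$ coincide; by the criterion just recalled, these two sets equal $\{h \in \mathbb N \colon h \ge \maxgap(X) - 1\}$ and $\{h \in \mathbb N \colon h \ge \maxgap(f(X)) - 1\}$ respectively, whence $\max\{0, \maxgap(X) - 1\} = \max\{0, \maxgap(f(X)) - 1\}$. This already settles the claim whenever $\maxgap(X) \ge 2$. The only obstacle, and it is a minor one, is the degenerate range $\maxgap \in \{0, 1\}$, where this identity fails to separate the two possible values. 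I would dispose of it by noting that $\maxgap(Y) = 0$ if and only if $Y = \{0\}$, together with the fact that $f$ fixes $\{0\}$ and preserves the maximum, so that $X = \{0\} \Leftrightarrow \max X = 0 \Leftrightarrow \max f(X) = 0 \Leftrightarrow f(X) = \{0\}$. Hence if $X = \{0\}$ both maximum gaps vanish, while if $X \ne \{0\}$ both are at least $1$ and the displayed identity upgrades to $\maxgap(X) = \maxgap(f(X))$. The whole argument is essentially a matter of packaging Eq.~\eqref{equ:11-05-23-1926} correctly, and the one step to get right is the bijectivity transfer in the displayed equivalence.
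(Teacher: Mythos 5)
Your proof is correct and takes essentially the same route as the paper's: both arguments rest on the threshold criterion attached to Eq.~\eqref{equ:11-05-23-1926} (namely that $X + \llb 0, h \rrb = \llb 0, \max X + h \rrb$ precisely when $h \ge \maxgap(X) - 1$), combined with the facts that $f$ fixes all intervals, preserves maxima, and is injective. The paper packages this as a contradiction at a single critical value of $h$, whereas you transport the entire threshold set through $f$ and treat the degenerate case $X = \{0\}$ separately; this is a cosmetic difference, and your direct version is, if anything, the cleaner of the two.
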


\begin{proof}
Let $X \in \mathcal P_{\fin,0}(\mathbb N)$ and put $d := \maxgap(f(X))$. Since the functional inverse of $f$ is itself an automorphism of $\mathcal P_{\fin,0}(\mathbb N)$, it suffices to prove that $\maxgap(X) \le  d$. For, set $X' := X + \llb 0, d-1 \rrb$ and suppose to the contrary that $d < \maxgap(X)$. Since $f(\{0\}) = \allowbreak \{0\}$, the gap set of $X$ is then a non-empty finite subset of $\mathbb N^+$ and hence $d$ is a \emph{positive} integer. By Remark \ref{rem:gaps} and Corollary \ref{cor:fixed-pts}\ref{cor:fixed-pts(iii)}, it follows that 
\[
f(X) + \llb 0, d-1 \rrb = \llb 0, d-1 + \max f(X) \rrb = 
 \llb 0, d-1 + \max X \rrb,
\]
which, along with Corollary \ref{cor:fixed-pts}\ref{cor:fixed-pts(i)}, implies
\[
f(X') = f(X) + f(\llb 0, d-1 \rrb) = f(X) + \llb 0, d-1 \rrb = f(\llb 0, d-1 + \max X \rrb).
\]
So, from the injectivity of $f$, we conclude that $X' = \llb 0, d-1 + \max X \rrb$. This is, however, in con\-tra\-dic\-tion with Remark \ref{rem:gaps}, as we are supposing $d < \maxgap(X)$.
\end{proof}

By Corollary \ref{cor:fixed-pts}\ref{cor:fixed-pts(iv)}, any automorphism of $\mathcal P_{\fin,0}(\mathbb N)$ belongs to one of two classes; namely, it either fixes the set $\{0, 2, 3\}$, or it maps $\{0, 2, 3\}$ to $\{0, 1, 3\}$. Our next goal is to demonstrate (in Lemma \ref{lem:fixing-the-3-sets}) that an automorphism in the first class has also to fix certain sets of the form $\{0\} \cup \llb b, c \rrb$ with $b, \allowbreak c \in \mathbb N$. 

\begin{proposition}
\label{prop:sumset-of-special-2-intervals}
For all $a, n \in \mathbb N$ with $n \ge a+1$, it holds that
\begin{equation}\label{equ:sumset-identity}
\sum_{i=0}^{n-1} \{0, a+i, a+i+1\} = \{0\} \cup \bigl\llb a, na + \tfrac{1}{2}n(n+1) \bigr\rrb.
\end{equation}
\end{proposition}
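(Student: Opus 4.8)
The plan is to count the sumset on the left directly rather than by induction on $n$ (a naive induction is awkward, since passing from $n$ to $n-1$ violates the standing hypothesis exactly at the boundary case $n = a+1$). First I would describe each element of $\sum_{i=0}^{n-1}\{0, a+i, a+i+1\}$ by recording, for each index $i \in \llb 0, n-1\rrb$, whether $0$, $a+i$, or $a+i+1$ is selected from the $i$-th summand. Writing $S \subseteq \llb 0, n-1\rrb$ for the set of indices at which a \emph{nonzero} term is chosen and $a+i+\epsilon_i$ (with $\epsilon_i \in \{0,1\}$) for the term selected when $i \in S$, every element of the sumset takes the form
$$\sum_{i \in S}(a + i + \epsilon_i) = ka + s + m, \qquad k := |S|,\ \ s := \sum_{i \in S} i,\ \ m := |\{i \in S : \epsilon_i = 1\}|,$$
and conversely each such triple is realizable (with harmless repetitions when $a = 0$). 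Hence the sumset equals the set of integers $ka + s + m$ as $k$ runs over $\llb 0, n\rrb$, $s$ over the sums of $k$-element subsets of $\llb 0, n-1\rrb$, and $m$ over $\llb 0, k\rrb$, all independently. The level $k = 0$ contributes exactly the point $\{0\}$, so it remains to show that the levels $k = 1, \dots, n$ fill the interval $\llb a, na + \tfrac12 n(n+1)\rrb$.

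Next I would fix $k$ and compute the contribution of that level. I would invoke the elementary fact that the subset sums $s = \sum_{i \in S} i$ over $k$-element subsets $S \subseteq \llb 0, n-1\rrb$ fill the entire interval $\llb \tbinom{k}{2}, kn - \tbinom{k+1}{2}\rrb$: the extremes are realized by $\llb 0, k-1\rrb$ and $\llb n-k, n-1\rrb$, and every intermediate value is reached by replacing some $j \in S$ with $j+1 \notin S$, which raises the sum by exactly $1$. Since $m$ independently ranges over $\llb 0, k\rrb$, and the sum of two integer intervals is an integer interval, the values $ka + s + m$ for this fixed $k$ fill precisely
$$\llb L_k, U_k\rrb, \qquad L_k := ka + \tbinom{k}{2}, \qquad U_k := ka + kn - \tbinom{k}{2},$$
where the upper endpoint uses $\tbinom{k+1}{2} - k = \tbinom{k}{2}$. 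One checks immediately that $L_1 = a$ and $U_n = na + \tfrac12 n(n+1)$, matching the two endpoints of the target interval.

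The crux is then to show that these $n$ intervals chain together with no gap, i.e.\ that $L_{k+1} \le U_k + 1$ for $1 \le k \le n-1$, so that their union is the single interval $\llb a, na + \tfrac12 n(n+1)\rrb$. A short computation reduces this to the identity
$$U_k + 1 - L_{k+1} = k(n-k) - a + 1,$$
and since $k(n-k) \ge n-1$ for every $k \in \llb 1, n-1\rrb$ (the minimum being attained at $k = 1$ and $k = n-1$), the hypothesis $n \ge a+1$ gives $U_k + 1 - L_{k+1} \ge (n-1) - a + 1 = n - a \ge 1$. Thus consecutive levels in fact overlap, the union is contiguous, and adjoining the point $\{0\}$ from level $k = 0$ produces exactly the right-hand side of Eq.~\eqref{equ:sumset-identity}. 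I expect this chaining estimate to be the only genuinely delicate point — it is precisely here that the hypothesis $n \ge a+1$ is consumed — whereas the parametrization and the per-level interval computation are routine once the set-up is in place.
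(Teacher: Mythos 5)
Your proof is correct and is essentially the paper's argument: both stratify the sumset by the number $k$ of indices at which a nonzero term is selected, identify the $k$-th level with the interval $\bigl\llb ka+\binom{k}{2},\, ka+kn-\binom{k}{2}\bigr\rrb$ (the paper's $\llb m_k, M_k\rrb$), and chain consecutive levels via the same inequality $\min_{1\le k\le n-1}k(n-k)=n-1\ge a$, which is exactly where the hypothesis $n\ge a+1$ is consumed in both treatments. The only real difference is how each level is shown to be a full interval: the paper sums only over \emph{consecutive} blocks of $k$ indices, whose contributions are translates of $\llb 0,k\rrb$ spaced $k$ apart and hence already tile the level, whereas you use all $k$-element subsets together with the (correct, elementary) fact that their subset sums fill an interval --- a slightly heavier lemma, in exchange for which your parametrization is exhaustive, a point the paper sidesteps by sandwiching the sumset between the consecutive-block lower bound and the obvious upper bound.
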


\begin{proof}
%Fix $a \in \mathbb N$ and $n \in \mathbb N^+$, and l
Let $A$ be the set on the left-hand side of Eq.~\eqref{equ:sumset-identity}. For each $j \in \llb 1, n \rrb$, we have
\begin{equation*}
\begin{split}
A_j & := \bigcup_{k=0}^{n-j} \, \sum_{i = k}^{k+j-1} \{a+i, a+i+1\} = \bigcup_{k=0}^{n-j} \sum_{i = 0}^{j-1} (a + k + i + \llb 0, 1 \rrb) = \bigcup_{k=0}^{n-j} \left(aj + kj + \tfrac{1}{2}(j-1)j + \llb 0, j \rrb\right) \\
& \phantom{:}= aj + \tfrac{1}{2}(j-1)j + \bigcup_{k=0}^{n-j} (kj + \llb 0, j \rrb) = aj + \tfrac{1}{2}(j-1)j +  \llb 0, (n-j+1)j \rrb;
\end{split}
\end{equation*}
in particular, $A_j$ is an interval contained in $A$, with
\begin{equation}
\label{equ:(1)-prop:sumset-of-special-2-intervals}
m_j := \min A_j = aj + \frac{1}{2} (j-1)j
\qquad\text{and}\qquad
M_j := \max A_j = m_j + (n-j+1)j. 
\end{equation}
Since $\min A^+ = a$ and $\max A = \sum_{i=0}^{n-1} (a + i+1) = na + \frac{1}{2}n(n+1)$, it follows that 
\begin{equation}
\label{equ:(2)-prop:sumset-of-special-2-intervals}
B := \{0\} \cup A_1 \cup \cdots \cup A_n \subseteq A \subseteq \{0\} \cup \left\llb a, na + \tfrac{1}{2} n(n+1) \right\rrb =: A'. 
\end{equation}
Now, using that $n \ge a+1$ (by hypothesis), we need to show that $A = A'$. If $n = 1$ (and hence $a = 0$), then it is readily checked that $A = A' = \{0, 1\}$ (and we are done). So, we may suppose $n \ge 2$. 

The function $\phi \colon \mathbb R \to \mathbb R: x \mapsto (n-x)x$ is in\-creas\-ing for $x \le \frac{1}{2}n$ and de\-creas\-ing for $x \ge \frac{1}{2}n$, so that the minimum of $\phi$ over the closed interval $[1,n-1]$ is given by $\phi(1) = \phi(n-1) = n-1 \ge \allowbreak a$.
Therefore, it is straightforward from Eq.~\eqref{equ:(1)-prop:sumset-of-special-2-intervals} that, for all $j \in \llb 1, n-1 \rrb$,
$$
m_{j+1} = a(j+1) + \tfrac{1}{2}j(j+1) = m_j + a + j \le 
m_j + (n-j)j + j = M_j.
$$
Recalling that $A_1, \ldots, A_n$ are intervals and considering that $m_1 = a$ and $M_n = an + \frac{1}{2}n(n+1)$, we thus find that
$
B = \{0\} \cup \llb m_1, M_1 \rrb \cup \cdots \cup \llb m_n, M_n \rrb = \{0\} \cup \llb m_1, M_n \rrb = A'$, which, by Eq.~\eqref{equ:(2)-prop:sumset-of-special-2-intervals}, yields $A = A'$ (as wished) and completes the proof.
\end{proof}

\begin{lemma}\label{lem:fixing-the-3-sets}
Assume $\{0, 2, 3\}$ is a fixed point of an automorphism $f$ of $\mathcal P_{\fin,0}(\mathbb N)$. The following hold:
\begin{enumerate}[label=\textup{(\roman{*})}]
\item\label{lem:fixing-the-3-sets(i)} If $1 \in X \in \mathcal P_{\fin,0}(\mathbb N)$, then $1 \in f(X)$.
\item\label{lem:fixing-the-3-sets(ii)} 
%$f(\{0, 1, a\}) = \{0, 1, a\}$ and 
$f(\{0, a, a+1\}) = \{0, a, a+1\}$ for every $a \in \mathbb N$.
\item\label{lem:fixing-the-3-sets(iii)} $\{0\} \cup \left\llb a, na + \frac{1}{2}n(n+1) \right\rrb$ is a fixed point of $f$ for all $a, n \in \mathbb N$ with $n \ge a+1$.
\end{enumerate}
\end{lemma}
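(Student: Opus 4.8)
The plan is to establish the three parts in the order \ref{lem:fixing-the-3-sets(i)} $\to$ \ref{lem:fixing-the-3-sets(ii)} $\to$ \ref{lem:fixing-the-3-sets(iii)}, with part \ref{lem:fixing-the-3-sets(i)} carrying essentially all the weight: once we can detect the element $1$, the other two parts fall out from the maximum–gap bookkeeping already in place and from Proposition \ref{prop:sumset-of-special-2-intervals}.

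For part \ref{lem:fixing-the-3-sets(i)} the key idea is to \emph{saturate} $X$ with copies of the fixed set $\{0,2,3\}$ until the sumset becomes a full interval, exploiting that $\{0,2,3\}$ contains no $1$. Concretely, I would set $n := \max X$ and first record that $m\{0,2,3\} = \{0\} \cup \llb 2, 3m \rrb$ for every $m \in \mathbb N^+$ (a one-line induction on $m$). Since $\{0,1,n\} \subseteq X$ (here the hypothesis $1 \in X$ enters), a routine interval sandwich
\[
\llb 0, n+3m \rrb = \{0,1,n\} + m\{0,2,3\} \subseteq X + m\{0,2,3\} \subseteq \llb 0, n+3m \rrb
\]
gives $X + m\{0,2,3\} = \llb 0, n+3m\rrb$ for all $m \ge n$. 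Now apply $f$: since $\{0,2,3\}$ is fixed and $f$ is a homomorphism, $m\{0,2,3\}$ is fixed, and since intervals are fixed (Corollary \ref{cor:fixed-pts}\ref{cor:fixed-pts(i)}) the right-hand side is fixed; hence $f(X) + m\{0,2,3\} = \llb 0, n+3m\rrb$. As the least positive element of $m\{0,2,3\}$ is $2$, the element $1$ of this interval can only be written as $y+z$ with $y \in f(X)$ and $z \in m\{0,2,3\}$ by taking $z = 0$ and $y = 1$; therefore $1 \in f(X)$. Note this argument uses only that $f$ is an automorphism fixing $\{0,2,3\}$, so it applies verbatim to $f^{-1}$.

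For part \ref{lem:fixing-the-3-sets(ii)}, the sets $\{0,0,1\} = \{0,1\}$ and $\{0,1,2\} = \llb 0,2\rrb$ are fixed by Corollary \ref{cor:fixed-pts}\ref{cor:fixed-pts(i)}, so I may assume $a \ge 2$. Then $\max f(\{0,a,a+1\}) = a+1$ and $\maxgap f(\{0,a,a+1\}) = a$ by Corollary \ref{cor:fixed-pts}\ref{cor:fixed-pts(iii)} and Lemma \ref{lem:max-delta-preserving}; a short case analysis on where a gap of size $a$ can sit inside $\llb 0, a+1\rrb$ shows the only candidates are $\{0,a,a+1\}$ and $\{0,1,a+1\}$. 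If the image were $\{0,1,a+1\}$, then applying part \ref{lem:fixing-the-3-sets(i)} to $f^{-1}$ (which also fixes $\{0,2,3\}$) at the set $\{0,1,a+1\} \ni 1$ would force $1 \in f^{-1}(\{0,1,a+1\}) = \{0,a,a+1\}$, impossible for $a \ge 2$. Finally, part \ref{lem:fixing-the-3-sets(iii)} is immediate: by Proposition \ref{prop:sumset-of-special-2-intervals} the set $\{0\} \cup \llb a, na + \tfrac12 n(n+1)\rrb$ equals $\sum_{i=0}^{n-1}\{0,a+i,a+i+1\}$, so applying $f$ and using part \ref{lem:fixing-the-3-sets(ii)} on each summand $\{0,a+i,a+i+1\}$ shows it is fixed.

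I expect the main obstacle to be part \ref{lem:fixing-the-3-sets(i)}. The naive attempts — tracking $\maxgap$ under doubling, or comparing $X$ with its reversal — fail, because $\maxgap$ is blind to \emph{where} the extremal gap sits and because every local sumset identity tends to push the ambiguity up to strictly larger sets rather than resolving it. The saturation trick sidesteps this by converting the qualitative question ``does $X$ contain $1$?'' into the quantitative statement ``is $X + m\{0,2,3\}$ the full interval?''; the only genuinely computational point is the displayed interval identity, which is elementary once one knows $\{0,1,\max X\} \subseteq X$.
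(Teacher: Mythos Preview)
Your proof is correct and follows essentially the same approach as the paper: the saturation argument for \ref{lem:fixing-the-3-sets(i)} via $X + m\{0,2,3\} = \llb 0, \max X + 3m\rrb$, the elimination of $\{0,1,a+1\}$ in \ref{lem:fixing-the-3-sets(ii)} by applying \ref{lem:fixing-the-3-sets(i)} to $f^{-1}$, and the reduction of \ref{lem:fixing-the-3-sets(iii)} to \ref{lem:fixing-the-3-sets(ii)} through Proposition \ref{prop:sumset-of-special-2-intervals} all match the paper's argument closely. The only cosmetic difference is that for \ref{lem:fixing-the-3-sets(ii)} the paper first excludes elements of $\llb 2, a-1\rrb$ and then $1$ separately, whereas you directly list the two candidate sets; the content is the same.
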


\begin{proof}
\ref{lem:fixing-the-3-sets(i)} Let $X \in \mathcal P_{\fin,0}(\mathbb N)$ and $k \in \mathbb N$. It is routine to show (by induction on $k$) that $k \{0, \allowbreak 2, 3\} = \{0\} \cup \allowbreak \llb 2, \allowbreak 3k \rrb$ (we leave the details to the reader). It is thus clear that, if $1 \in X$ and $k \ge (2 + \max X)/3$, then
$$
\llb 0, 3k + \max X \rrb = (\{0, 1\} \cup \{\max X\}) + (\{0\} \cup \llb 2, 3k \rrb) \subseteq X+k \{0, 2, 3\} \subseteq \llb 0, 3k + \max X \rrb;
$$
that is, $X+k \{0, 2, 3\}$ is the interval $\llb 0, 3k + \max X \rrb$ and hence, by Corollary \ref{cor:fixed-pts}\ref{cor:fixed-pts(i)}, a fixed point of $f$. So, by the hypothesis that $f$ is a homomorphism with $f(\{0, 2, 3\}) = \{0, 2, 3\}$, we conclude that, if $1 \in X$ and $k$ is a sufficiently large integer, then
\begin{equation}\label{equ:simple-chain-of-equals}
1 \in \llb 0, 3k + \max X \rrb = f(X + k \{0, 2, 3\}) = f(X) + k \{0, 2, 3\} = f(X) + (\{0\} \cup \llb 2, 3k \rrb).
\end{equation}
But this is only possible if $1 \in f(X)$, since the least \emph{positive} element of the set on the right-most side of Eq.~\eqref{equ:simple-chain-of-equals} is the minimum between $2$ and the least positive element of $f(X)$ (note that $\{0\} \subsetneq f(X)$).

\vskip 0.05cm

\ref{lem:fixing-the-3-sets(ii)} Let $a \in \mathbb N$, and set $X := \{0, a, a+1\}$ and $Y := f(X)$. We need to prove $Y = X$. If $a = 0$ or $a = 1$, then $X$ is an interval and we are done by Corollary \ref{cor:fixed-pts}\ref{cor:fixed-pts(i)}. So, assume $a \ge 2$. 

We gather from Corollary \ref{cor:fixed-pts}\ref{cor:fixed-pts(iii)} that $\max Y = \max X = a+1$; and from Lemma \ref{lem:max-delta-preserving} that $\delta := \maxgap(Y) = \maxgap(X) = a \ge 2$. 
It follows that $Y$ does not contain any integer $y$ in the interval $\llb 2, a-1 \rrb$, or else we would find that
$
\delta \le \allowbreak \min(y, \allowbreak a + \allowbreak 1 - \allowbreak y) \le \allowbreak a-1 < \delta$ (a contradiction). Considering that the functional inverse of $f$ is an auto\-mor\-phism of $\mathcal P_{\fin,0}(\mathbb N)$ sending $Y$ to $X$,  we infer on the other hand from item \ref{lem:fixing-the-3-sets(i)} that $1 \notin Y$. So putting everything together, we conclude from Corollary \ref{cor:fixed-pts}\ref{cor:fixed-pts(i)} that $\{0, a+1\} \subsetneq Y \subseteq \allowbreak \{0, \allowbreak a, \allowbreak a+1\} = \allowbreak X$, which shows that $Y = X$ and completes the proof.

\vskip 0.05cm

\ref{lem:fixing-the-3-sets(iii)} Given $a, n \in \mathbb N$ with $n \ge a+1$, we have from Proposition \ref{prop:sumset-of-special-2-intervals} that $\{0\} \cup \bigl\llb a, na + \frac{1}{2}n(n+1) \bigr\rrb$ can be written as the sum of the sets $\{0, a+i, \allowbreak a + \allowbreak i+1\}$ as $i$ ranges over the interval $\llb 0, n-1 \rrb$. 
By item \ref{lem:fixing-the-3-sets(ii)} and Remark \ref{rem:autos}\ref{rem:autos(3)} (specialized to $\mathcal P_{\fin,0}(\mathbb N)$), this is enough to prove the claim.
\end{proof}

\section{Main result}
\label{sect:3}

Given a set $S \subseteq \mathbb Z$, we denote by $\fdim(S)$ the smallest integer $k \ge 0$ for which there exist $k$ (discrete) intervals whose union is $S$, with the understanding that if no such $k$ exists then $\fdim(S) := \infty$. We call $\fdim(S)$ the \evid{boxing dimension} of $S$. A couple of remarks are in order.

\begin{remark}\label{rem:boxing-dim}
\begin{enumerate*}[label=\textup{(\arabic{*})}]
\item\label{rem:boxing-dim(1)} The boxing dimension of a set $S \subseteq \mathbb Z$ is zero if and only if $S$ is empty, and it is one if and only if $S$ is an interval. More generally, let us say that two sets $X, Y \subseteq \mathbb Z$ are \evid{well separated} if $|x-y| \ge 2$ for all $x \in X$ and $y \in Y$ (so, well-separated sets are disjoint). It is then easily checked that the boxing dimension of $S$ is equal to a certain integer $k \ge 0$ if and only if $S$ can be written as a union of $k$ well-separated non-empty intervals (we leave the details to the reader).\\

\indent{}For instance, the sets $A := \{0, 5\} \cup \mathbb N_{\ge 7}$ and $B := \{-2, 2, 3\}$ are well separated, with $\fdim(A) = 3$ and $\fdim(B) = 2$. To the contrary, the (set of) even integers and the odd integers are not well separated, and the boxing dimension of each of them is infinite.
\end{enumerate*}

\vskip 0.05cm

\begin{enumerate*}[label=\textup{(\arabic{*})},resume]
\item\label{rem:boxing-dim(2)} It is obvious that $\fdim(X \cup Y) \le \fdim(X) + \allowbreak \fdim(Y)$ for all $X, Y \subseteq \mathbb Z$, a property we refer to as \emph{subadditivity} (in analogy with other notions of ``dimension'' in union-closed families of sets).
\end{enumerate*}
\end{remark}

The proof of the next theorem (that is, the main theorem of the paper) is essentially an induction on the boxing dimension of a set $X \in \mathcal P_{\fin,0}(\mathbb N)$.

\begin{theorem}\label{thm:main}
The only automorphisms of the reduced power monoid $\mathcal P_{\fin,0}(\mathbb N)$ of $(\mathbb N, +)$ are the identity $X \mapsto X$ and the reversion map $X \mapsto \max X - X$.
\end{theorem}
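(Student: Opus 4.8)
The plan is to reduce the statement to a single rigidity claim and then prove that claim by induction on boxing dimension. By Corollary~\ref{cor:fixed-pts}\ref{cor:fixed-pts(iv)}, every $f \in \aut(\mathcal P_{\fin,0}(\mathbb N))$ either fixes $\{0,2,3\}$ or sends it to $\{0,1,3\}$. Since $\mathrm{rev}(\{0,1,3\}) = 3 - \{0,1,3\} = \{0,2,3\}$ and $\mathrm{rev}$ is an automorphism (Remark~\ref{rem:autos}\ref{rem:autos(2)}), the composition $\mathrm{rev} \circ f$ fixes $\{0,2,3\}$ in the second case. Hence it suffices to prove the following: \emph{if $g \in \aut(\mathcal P_{\fin,0}(\mathbb N))$ fixes $\{0,2,3\}$, then $g = \id$.} Granting this, in the first case $f = \id$, while in the second case $\mathrm{rev} \circ f = \id$, i.e.\ $f = \mathrm{rev}$ (as $\mathrm{rev}$ is an involution); these are exactly the two maps in the statement.

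To prove the rigidity claim, I would first harvest a large stock of fixed points of $g$. From Corollary~\ref{cor:fixed-pts}\ref{cor:fixed-pts(i)} and Lemma~\ref{lem:fixing-the-3-sets} we already know that $g$ fixes every interval $\llb 0, k \rrb$, every two-element set $\{0, k\}$, every set $\{0, a, a+1\}$, and every set $\{0\} \cup \llb a, na + \tfrac12 n(n+1)\rrb$ with $n \ge a+1$. Since the fixed points of $g$ form a submonoid (Remark~\ref{rem:autos}\ref{rem:autos(3)}), \emph{every} finite sumset of the sets just listed is fixed as well; call $\mathcal F$ the resulting collection. Alongside this, I would record the three invariants that $g$ is forced to preserve: $\max X$ (Corollary~\ref{cor:fixed-pts}\ref{cor:fixed-pts(iii)}), $\maxgap(X)$ (Lemma~\ref{lem:max-delta-preserving}), and the membership of $1$ in $X$ (Lemma~\ref{lem:fixing-the-3-sets}\ref{lem:fixing-the-3-sets(i)}, applied to $g$ and to $g^{-1}$).

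The core is an induction on the boxing dimension $\fdim(X)$. The base case $\fdim(X) \le 1$ is settled by Corollary~\ref{cor:fixed-pts}\ref{cor:fixed-pts(i)}, since a set in $\mathcal P_{\fin,0}(\mathbb N)$ of boxing dimension at most $1$ is an interval $\llb 0, k\rrb$. For the inductive step, fix $X$ with $\fdim(X) = d \ge 2$ and assume every automorphism fixing $\{0,2,3\}$ fixes all sets of boxing dimension $< d$. The image $g(X)$ shares with $X$ its maximum, its maximal gap, and whether $1$ belongs to it, so $g(X)$ ranges over a \emph{finite} list of candidate subsets of $\llb 0, \max X\rrb$. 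I would then argue that every candidate other than $X$ itself is already a fixed point of $g$: either it has boxing dimension $< d$ and is covered by the inductive hypothesis, or it is a nontrivial sumset of smaller previously-fixed sets and therefore lies in $\mathcal F$. Once this is established, the injectivity of $g$ forces $g(X) = X$, completing the induction. The mechanism is already visible for the atom $X = \{0,2,3,4\}$: its only competitor with maximum $4$, maximal gap $2$, and $1 \notin X$ is $\{0,2,4\} = \{0,2\} + \{0,2\} \in \mathcal F$, so injectivity rules it out.

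The main obstacle is exactly this candidate-elimination. Because $\mathcal P_{\fin,0}(\mathbb N)$ is highly non-cancellative, one cannot simply cancel a common summand to deduce $g(X) = X$; and the competitors of $X$ need not have smaller boxing dimension—as the example shows, $\{0,2,4\}$ has boxing dimension $3 > 2 = \fdim(X)$—so the inductive hypothesis alone does not dispatch them. The crux is therefore a careful combinatorial analysis establishing that, among all subsets of $\llb 0, \max X\rrb$ sharing $X$'s three invariants, the only one that is neither of strictly smaller boxing dimension nor a nontrivial sum of already-fixed sets is $X$ itself. This is where the full strength of Lemma~\ref{lem:fixing-the-3-sets} and the ``unconventional'' bookkeeping afforded by the boxing dimension must be brought to bear; the delicate cases are precisely the atoms, for which sumset decompositions are unavailable and the argument must proceed purely through the preserved invariants and the finiteness of each fibre of $g$.
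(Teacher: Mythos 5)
Your reduction to the rigidity claim (an automorphism fixing $\{0,2,3\}$ is the identity), your inventory of fixed points and preserved invariants, and your choice of induction parameter all coincide with the paper's. The problem is that the inductive step is not actually carried out: you reduce it to the assertion that every subset of $\llb 0, \max X \rrb$ sharing the invariants $\max$, $\maxgap$ and ``contains $1$'' with $X$, other than $X$ itself, is either of boxing dimension strictly less than $\fdim(X)$ or a non-trivial sumset of already-fixed sets --- and this assertion is false. Take $X = \{0,2,5\}$ and the candidate $Y = \{0,3,5\}$: both have maximum $5$, maximal gap $3$, neither contains $1$, both have boxing dimension $3$, and both are atoms of $\mathcal P_{\fin,0}(\mathbb N)$ (a non-trivial decomposition $A+B$ would force $A, B \subseteq X$ and $\max A + \max B = 5$ with $\max A, \max B \in \{2,5\}$, resp.\ $\{3,5\}$, which is impossible). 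Neither branch of your dichotomy applies to $Y$, so your argument gives no way to rule out $g(\{0,2,5\}) = \{0,3,5\}$. You honestly flag this candidate-elimination as ``the crux'', but that crux is the entire content of the theorem; as it stands the proposal is a correct skeleton with the load-bearing step missing.

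The paper closes exactly this gap by a different mechanism. It writes $X$ and $Y = f(X)$ as unions of well-separated intervals $\llb x_0,x_1\rrb \cup \cdots \cup \llb x_{2r}, x_{2r+1}\rrb$ and $\llb y_0,y_1\rrb \cup \cdots \cup \llb y_{2s}, y_{2s+1}\rrb$, locates the first index $v$ with $x_v \ne y_v$, and then adds to both sets a carefully tailored fixed summand: when $v$ is even, the interval $\llb 0, d\rrb$ with $d = x_{2u}-x_{2u-1}-1$, which merges two intervals of $X$ so that $\fdim(X+\llb 0,d\rrb) < \fdim(X)$ and the sum is fixed by induction, while $X + \llb 0,d\rrb \ne Y + \llb 0,d\rrb$; when $v$ is odd, a two-interval set $\{0\}\cup\llb y_{2u+1}+1, b\rrb$, which is fixed by Lemma \ref{lem:fixing-the-3-sets}\ref{lem:fixing-the-3-sets(iii)}, combined with an auxiliary minimality assumption on $\fdim(f(X))$ among counterexamples of the same boxing dimension. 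In your test case this works immediately: $X + \llb 0,1\rrb = \{0,1,2,3,5,6\}$ has boxing dimension $2$ and is therefore fixed, yet it differs from $Y + \llb 0,1\rrb = \{0,1,3,4,5,6\}$ --- a contradiction that the invariants alone cannot produce. If you want to complete your approach, you need a replacement for the false dichotomy, and the natural replacement is precisely this ``add a tailored fixed summand and compare'' device rather than a direct classification of candidates.
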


\begin{proof}
Let $\Gamma$ be the set of automorphisms of $\mathcal P_{\fin,0}(\mathbb N)$ that fix $\{0, 2, 3\}$, and define $\Gamma^\prime := \{f^\ast \colon f \in \Gamma\}$, where $f^\ast$ denotes the reversal of $f$. We infer from Corollary \ref{cor:fixed-pts}\ref{cor:fixed-pts(iv)} and Lemma \ref{lem:reversion} that $\aut(\mathcal P_{\fin,0}(\mathbb N)) = \Gamma \cup \allowbreak \Gamma^\prime$. 
It is therefore enough to show that the only automorphism in $\Gamma$ is the identity $X \mapsto X$, whose reversal is indeed the reversion map $X \mapsto \max X - X$.

For, let $f \in \Gamma$ and $X \in \mathcal P_{\fin,0}(\mathbb N)$, and put $Y := f(X)$, $r := \fdim(X) - 1$, $s := \fdim(Y) - 1$, and $t := \allowbreak \min(r, \allowbreak s)$. We get from Corollary \ref{cor:fixed-pts}\ref{cor:fixed-pts(iii)} that $\mu := \max(X) = \max(Y)$. Moreover, $r$ and $s$ are non-negative integers, since the empty set is the only set whose boxing dimension is zero. Hence, by Remark \ref{rem:boxing-dim}, there exist increasing sequences $x_0, x_1, \ldots, x_{2r+1}$ and $y_0, y_1, \ldots, y_{2s+1}$ of integers such that 
\begin{enumerate}[label=\textup{(\Roman{*})}]
\item\label{item(1)} $x_{2i-1} + 2 \le x_{2i}$ for each $i \in \llb 1, r \rrb$ and $y_{2j-1} + 2 \le y_{2j}$ for each $j \in \llb 1, s \rrb$;

\item\label{item(2)} $X = \llb x_0, x_1 \rrb \cup \cdots \cup \llb x_{2r}, x_{2r+1} \rrb$ and $Y = \llb y_0, y_1 \rrb \cup \cdots \cup \llb y_{2s}, y_{2s+1} \rrb$. 
\end{enumerate}
In particular, $x_0 = y_0 = 0$ and $x_{2r+1} = y_{2s+1} = \mu$. We will prove by induction on $r$ that $X = Y$. 

If $r = 0$, then $X$ is an interval and we have from Corollary \ref{cor:fixed-pts}\ref{cor:fixed-pts(i)} that $X = f(X) = Y$ (as wished).
So, let $r \ge \allowbreak 1$, assume for the sake of induction that $f(S) = S$ for all $S \in \mathcal P_{\fin,0}(\mathbb N)$ with $\fdim(S) \le r$, and suppose by way of contradiction that $X \ne Y$. 
Accordingly, there is a smallest index $v \in \allowbreak \llb 1, 2t+1 \rrb$ such that $x_v \ne y_v$; otherwise, since $x_0 \le x_1 \le \cdots \le x_{2r+1}$ and $y_0 \le y_1 \le \cdots \le y_{2s+1}$, we would get from conditions \ref{item(1)} and \ref{item(2)} that 
$X = Y$, which is absurd. We distinguish two cases, depending on whether $v$ is even or odd. But first, some observations are in order.

To start with, there is no loss of generality in assuming $x_v < y_v$; otherwise, we could replace $f$ with its functional inverse $f^{-1}$, noting that $f^{-1}$ is an automorphism of $\mathcal P_{\fin,0}(\mathbb N)$ with the same fixed points as $f$. Next, it is clear that $r \le s$, or else $Y$ is fixed by $f$ (by the inductive hypothesis) and hence $f(Y) = \allowbreak Y = \allowbreak f(X)$, which is still absurd since $f$ is injective (and we have $X \ne Y$). It follows that $1 \le \allowbreak v \le \allowbreak 2r$, because $v = 2r+1$ would imply $\mu = x_{2r+1} < y_{2r+1} \le y_{2s+1} = \mu$. Lastly, we may suppose that, if $X'$ is another set in  $\mathcal P_{\fin,0}(\mathbb N)$ with $\fdim(X') = r+1$ and $X' \ne f(X')$, then $\fdim(Y) \le \allowbreak \fdim(f(X'))$; otherwise, we could trade $X$ for $X'$ and iterate the argument until the condition is satisfied (since $\fdim(Y) < \infty$, this will happen after finitely many iterations). In other words, we may assume that $X$ is \textsf{$\fdim$-cominimal} in the (non-empty) family $\mathscr S_r$ of all $S \in \mathcal P_{\fin,0}(\mathbb N)$ with $\fdim(S) = r + \allowbreak 1$ and $S \ne f(S)$, by which we mean that $\fdim(Y) \le \fdim(f(S))$ for every $S \in \mathscr S_r$.

\vskip 0.05cm

\textsc{Case 1:} $v = 2u$ for some $u \in \llb 1, r \rrb$. Put $d := x_{2u} - x_{2u-1} - 1$ and $I := \llb 0, r \rrb \setminus \{u-1, u\}$, and set 
\[
X_1 := \llb x_{2(u-1)}, x_{2u+1} + d \rrb
\qquad\text{and}\qquad
X_2 := \bigcup_{i \in I} \llb x_{2i}, x_{2i+1} + d \rrb;
\]
in particular, if $I = \emptyset$, then $X_2 = \emptyset$. It is a basic fact that
\[
\llb a, b \rrb + \llb a', b' \rrb = \llb a+a', b+b' \rrb, \qquad
\text{for all }a, b, a', b' \in \mathbb Z  \text{ with }a \le b \text{ and } a' \le b'.
\]
So, using that $X = \llb x_0, x_1 \rrb \cup \cdots \cup \llb x_{2r}, x_{2r+1} \rrb$ and $x_{2u-1} + d = x_{2u} - 1$, we have 
\begin{equation}\label{equ:sumset-as-union(1)}
X + \llb 0, d \rrb = \bigcup_{i=0}^r (\llb x_{2i}, x_{2i+1} \rrb + \llb 0, d \rrb) = \bigcup_{i=0}^r \llb x_{2i}, x_{2i+1} + d \rrb = X_1 \cup X_2.
\end{equation}
In a similar way, $Y = \llb y_0, y_1 \rrb \cup \cdots \cup \llb y_{2s}, y_{2s+1} \rrb$ leads to
\begin{equation}\label{equ:sumset-as-union(2)}
    Y + \llb 0, d \rrb = \bigcup_{j=0}^s \llb y_{2j}, y_{2j+1} + d \rrb.
\end{equation}
It follows from Eq.~\eqref{equ:sumset-as-union(1)} and Remark \ref{rem:boxing-dim}\ref{rem:boxing-dim(2)} that
\[
\fdim(X + \llb 0, d \rrb) \le \fdim(X_1) + \fdim(X_2) \le 1 + |I| = r < \fdim(X). 
\]
Consequently, we conclude from the inductive hypothesis and Corollary \ref{cor:fixed-pts}\ref{cor:fixed-pts(i)} that
$$
X + \llb 0, d \rrb = f(X + \llb 0, d \rrb) = f(X) + f(\llb 0, d \rrb) = Y + \llb 0, d \rrb. 
$$
This is however impossible, as we get from Eqs.~\eqref{equ:sumset-as-union(1)} and \eqref{equ:sumset-as-union(2)} that $x_{2u}$ is an element of $X + \llb 0, d \rrb$ but not of $Y + \allowbreak \llb 0, d \rrb$ (note that $y_{2u-1} + d  = x_{2u-1} + (x_{2u} - x_{2u-1} - 1) < x_{2u} < y_{2u}$).

\vskip 0.05cm

\textsc{Case 2:} $v = 2u+1$ for some $u \in \llb 0, r-1 \rrb$. 
Let $b$ be an integer $\ge b_0 := y_{2u+1} + \max(x_{2r}, y_{2s})$.
Since $x_0, \allowbreak \ldots, x_{2r+1}$ and $y_0, \allowbreak \ldots, \allowbreak y_{2s+1}$ are increasing sequences of non-negative integers with $0 = y_0 = x_0 < \allowbreak x_{2r} < \allowbreak x_{2r+1} = y_{2s+1} = \mu$, we have
\begin{equation}\label{equ:case2(1)}
\begin{split}
X' & := 
\bigcup_{i=0}^r \llb x_{2i} + y_{2u+1} + 1, x_{2i+1} + b \rrb \subseteq \llb y_{2u+1} + 1, \mu + b \rrb = \llb y_{2u+1} + 1, b \rrb \cup \llb b + 1, \mu + b \rrb \\
& \phantom{:}\subseteq  \llb x_0 + y_{2u+1} + 1, x_1 + b \rrb \cup  \llb x_{2r} + y_{2u+1} + 1, x_{2r+1} + b \rrb \subseteq X'
\end{split}
\end{equation}
and, in an analogous fashion,
\begin{equation}\label{equ:case2(2)}
\begin{split}
Y' & := \bigcup_{j=0}^s \llb y_{2j} + y_{2u+1} + 1, y_{2j+1} + b \rrb \subseteq \llb y_{2u+1} + 1, \mu + b \rrb = \llb y_{2u+1} + 1, b \rrb \cup \llb b + 1, \mu + b \rrb \\
& \phantom{:}\subseteq \llb y_0 + y_{2u+1} + 1, y_1 + b \rrb \cup  \llb y_{2s} + y_{2u+1} + 1, y_{2s+1} + b \rrb \subseteq Y'.
\end{split}
\end{equation}
From the above and condition \ref{item(2)} (cf.~the derivation of Eqs.~\eqref{equ:sumset-as-union(1)} and \eqref{equ:sumset-as-union(2)} in \textsc{Case 1}), it follows that
\begin{equation}\label{equ:double-equality}
X + \llb y_{2u+1} + 1, b \rrb = X' = \llb y_{2u+1} + 1, \mu + b \rrb = Y' = Y + \llb y_{2u+1} + 1, b \rrb.
\end{equation}
Let us define 
\[
a := \min(x_{2r}, y_{2u+1} + 1)
\qquad \text{and}
\qquad
Z := \{0\} \cup \llb y_{2u+1} + 1, b \rrb. 
\]
Taking $X_\ast := \llb x_0, x_1 \rrb \cup \cdots \cup \llb x_{2(r-1)}, x_{2(r-1)+1} \rrb$ and considering that 
\begin{equation}\label{equ:decomposing-X}
X = X_\ast \cup \llb x_{2r}, x_{2r+1} \rrb
\qquad\text{and}\qquad
\llb x_{2r}, x_{2r+1} \rrb \cup \llb y_{2u+1} + 1, \mu + b \rrb = \llb a, \mu + b \rrb,
\end{equation}
it is readily seen that
\begin{equation}\label{equ:X+Z}
X + Z = X \cup (X + \llb y_{2u+1} + 1, b \rrb) 
\stackrel{\eqref{equ:double-equality}}{=} X \cup \llb y_{2u+1} + 1, \mu + b \rrb \stackrel{\eqref{equ:decomposing-X}}{=} X_\ast \cup \llb a, \mu + b \rrb.
\end{equation}
Likewise, taking $Y^\ast := \llb y_{2u}, y_{2u+1} \rrb \cup \cdots \cup \llb y_{2s}, y_{2s+1} \rrb$ and 
considering that
\begin{equation}\label{equ:decomposing-Y}
Y_\ast := Y \setminus Y^\ast = \bigcup_{j=0}^{u-1} \llb y_{2j}, y_{2j+1} \rrb 
\qquad\text{and}\qquad
Y^\ast \cup \llb y_{2u+1} + 1, \mu + b \rrb = \llb y_{2u}, \mu + b \rrb,
\end{equation}
we find that
\begin{equation}\label{equ:Y+Z}
Y + Z = Y \cup (Y + \llb y_{2u+1} + 1, b \rrb) 
\stackrel{\eqref{equ:double-equality}}{=} Y \cup \llb y_{2u+1} + 1, \mu + b \rrb 
\stackrel{\eqref{equ:decomposing-Y}}{=} Y_\ast \cup \llb y_{2u}, \mu + b \rrb.
\end{equation}
In view of Remark \ref{rem:boxing-dim}\ref{rem:boxing-dim(2)}, we thus get from Eq.~\eqref{equ:X+Z} that
\begin{equation}\label{equ:fdim(X+Z)}
\fdim(X + Z) \le \fdim(X_\ast) + \fdim(\llb a, \mu+b \rrb) = r + 1 = \fdim(X),
\end{equation}
and from Eq.~\eqref{equ:Y+Z} that
\begin{equation}\label{equ:fdim(Y+Z)}
\fdim(Y + Z) \le \fdim(Y_\ast) + \fdim(\llb y_{2u}, \mu + b \rrb) = u + 1 \le (r-1) + 1 \le s < \fdim(Y).
\end{equation}
In light of Lemma \ref{lem:fixing-the-3-sets}\ref{lem:fixing-the-3-sets(iii)}, we can on the other hand choose the integer $b$ (subjected to the constraint $b \ge b_0$) in such a way that $f(Z) = Z$ and, hence,
\begin{equation}\label{equ:Z-is-fixed}
f(X+Z) = f(X) + f(Z) = Y + Z.
\end{equation}
Accordingly, we gather from Eqs.~\eqref{equ:fdim(X+Z)} and \eqref{equ:fdim(Y+Z)} that either $\fdim(X+Z) \le r$, or $\fdim(X+Z) = r+1$ and $\fdim(f(X+Z)) < \fdim(Y)$. In any case, we conclude by the inductive hypothesis and the $\fdim$-cominimality of $X$ that $X+Z$ is a fixed point of $f$, which, together with Eq.~\eqref{equ:Z-is-fixed}, implies
\[
X+Z = f(X+Z) = Y + Z.
\]
This is, however, impossible and finishes the proof. In fact, Eq.~\eqref{equ:X+Z} gives $X + Z \subseteq X \cup \allowbreak \llb a, \allowbreak b + \mu \rrb$. Since, by Condition \ref{item(1)}, $x_{2u+1} + 1 \notin X$ and $a \ge x_{2r} > x_{2u+1} + 1$, it is then clear that $x_{2u+1} + 1 \notin X+Z$. Yet, $x_{2u+1} + 1 \in Y + \allowbreak Z$, because $y_{2u} = x_{2u} < x_{2u+1}$ and hence, by Eq.~\eqref{equ:Y+Z}, $x_{2u+1} + 1 \in \llb y_{2u}, 2\mu \rrb \subseteq Y + Z$.
\end{proof}

\section{Prospects for future research}
\label{sec:conclusions}

We have found (Theorem \ref{thm:main}) that the automorphism group of the reduced power monoid $\mathcal P_{\fin,0}(\mathbb N)$ of the additive monoid $(\mathbb N, +)$ of non-negative integers has order $2$, the only non-trivial automorphism of $\mathcal P_{\fin,0}(\mathbb N)$ being the reversion map $X \mapsto \max X - X$. In fact, there is some evidence that the result can be pushed a little further.
More precisely, recall that a \evid{numerical monoid} is a submonoid $S$ of $(\mathbb N, +)$ such that $\mathbb N \setminus S$ is a finite set. We then have the following:

\begin{conjecture*}
The automorphism group of the reduced power monoid of a numerical monoid \emph{properly} contained in $\mathbb N$ is trivial (that is, the only automorphism is the identity).
\end{conjecture*}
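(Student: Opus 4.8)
The plan is to run the same machinery developed in Sections~\ref{sec:preliminaries} and~\ref{sect:3}, but with the genuine (discrete) intervals systematically replaced by the cofinite initial segments of $S$. Throughout, write $m := \min S^+$ for the multiplicity of $S$ and let $F$ be the Frobenius number of $S$ (the largest integer not in $S$), both finite because $S$ is a numerical monoid. The first observation is that $S \subsetneq \mathbb N$ forces $1 \notin S$: indeed $1 \in S$ would give $\mathbb N = \langle 1\rangle \subseteq S$. Two of the preliminary lemmas then transfer with no change: Lemma~\ref{lem:FTAC-decomposition} holds verbatim, since every sumset formed from subsets of $S$ again lies in $S$ and the Fundamental Theorem of Additive Combinatorics only sees $S$ as a subset of $\mathbb N$; and the proof of Lemma~\ref{lem:2-element-sets-to-2-elements-sets} goes through unchanged, so any $f \in \aut(\mathcal P_{\fin,0}(S))$ sends two-element sets to two-element sets. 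Note that the reversion map is \emph{not} available here: for any $N \ge F+2$ we have $N-1 \in S$ while $N-(N-1)=1 \notin S$, so $X \mapsto \max X - X$ does not preserve $\mathcal P_{\fin,0}(S)$. This is exactly why one expects \emph{no} second automorphism, and it means that --- unlike in the proof of Theorem~\ref{thm:main} --- there is no $\Gamma/\Gamma'$ dichotomy to manage.

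The next step is to prove that every $f \in \aut(\mathcal P_{\fin,0}(S))$ preserves the maximum, and here the rigidity of $S$ does the work in place of the now-missing set $\{0,1\}$. Applying $f$ to the identity $(k+1)X = kX + \{0,\max X\}$ from Lemma~\ref{lem:FTAC-decomposition} and reading off maxima yields $\max f(X) = \max f(\{0,\max X\})$. Since $f$ maps two-element sets to two-element sets, this defines a bijection $\sigma$ of $S^+$ by $f(\{0,a\}) = \{0,\sigma(a)\}$, with $\max f(X) = \sigma(\max X)$ for every $X$. Comparing maxima on both sides of $f(\{0,a\}+\{0,b\}) = f(\{0,a\}) + f(\{0,b\})$ gives $\sigma(a+b)=\sigma(a)+\sigma(b)$, so (setting $\sigma(0):=0$) the map $\sigma$ is a monoid automorphism of $S$. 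As $S$ generates $\mathbb Z$ as a group, $\sigma$ extends to an automorphism of $\mathbb Z$, hence to multiplication by $\pm 1$; positivity forces $\sigma = \id$. Consequently $\max f(X) = \max X$ for all $X$, and every two-element set is a fixed point of $f$.

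With maxima preserved, I would induct on the \emph{relative} boxing dimension $\fdim_S(X)$, defined as the least number of $S$-intervals $S \cap \llb \alpha,\beta\rrb$ whose union is $X$; this is subadditive, and the cofinite initial segments $U_N := S \cap \llb 0,N\rrb$ are exactly the sets with $0 \in X$ and $\fdim_S = 1$. The engine is the absorbing identity $X + U_N = U_{N+\max X}$, valid for $N > F + \max X$ (the $S$-analog of the interval computation in Lemma~\ref{lem:endos}): taking $X = \{0,s\}$ with $s \in S^+$ fixed, this reduces fixing $U_N$ to finitely many small initial segments, which are pinned down using that $f$ preserves atoms and maxima together with $1 \notin S$. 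Granting the base case $\fdim_S = 1$, the inductive step runs as in Theorem~\ref{thm:main}: in the even subcase one adds a short $S$-interval to lower $\fdim_S(X)$ and invokes the inductive hypothesis, while in the odd subcase one forms $X + Z$ with $Z := \{0\} \cup (S \cap \llb \alpha,\beta\rrb)$ --- fixed by the $U$-analog of Lemma~\ref{lem:fixing-the-3-sets} --- and derives a $\fdim_S$-cominimal contradiction, the absence of $1$ from $S$ providing the separating element in place of the step ``$x_{2u+1}+1 \notin X$''.

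The hard part will be the base case together with the bookkeeping in the inductive step, both of which hinge on how $S$-intervals interact with the internal gaps of $S$. Over $\mathbb N$, adding $\llb 0,d\rrb$ merges two adjacent blocks and lowers the boxing dimension by a predictable amount; over $S$, whether adding $S\cap\llb\alpha,\beta\rrb$ fuses two $S$-blocks of $X$ depends on the gap pattern of $S$ itself, so the ``dimension drops'' accounting must be made gap-aware and uniform across all numerical monoids. Equivalently, the crux is to show that $f$ fixes every initial segment $U_N$ --- the analog of ``intervals are fixed'' (Corollary~\ref{cor:fixed-pts}\ref{cor:fixed-pts(i)}) --- for which the clean relation $k\{0,1\}=\llb 0,k\rrb$ has no counterpart. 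I would attack this by characterizing the $U_N$ intrinsically, as the unique sets of a given maximum that simultaneously minimize $\fdim_S$ and an $S$-relative maximal-gap statistic, and then proving that this characterization is $f$-invariant; accordingly, I expect the single most delicate ingredient to be a robust, $S$-uniform version of Lemma~\ref{lem:max-delta-preserving} (preservation of the maximal gap) in the relative setting.
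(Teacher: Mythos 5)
First, a point of order: the statement you are proving is presented in the paper as an open \emph{conjecture} --- the authors offer no proof of it and explicitly stop at Theorem \ref{thm:main} (the case $S=\mathbb N$) --- so there is no proof in the paper to compare yours against, and a complete argument here would be new mathematics. Judged on its own terms, your proposal is a programme rather than a proof, and it contains a genuine gap at exactly the point where the difficulty of the conjecture is concentrated. The parts that do work are real contributions: Lemmas \ref{lem:FTAC-decomposition} and \ref{lem:2-element-sets-to-2-elements-sets} do transfer to $\mathcal P_{\fin,0}(S)$ essentially verbatim, your observation that the reversion map fails to preserve $\mathcal P_{\fin,0}(S)$ correctly explains why no second automorphism is expected, and the derivation of max-preservation is genuinely nice --- extracting an additive bijection $\sigma$ of $S^+$ from the action on two-element sets and forcing $\sigma=\id$ because $(S,+)$ has no non-trivial automorphisms is a clean replacement for the role of $\{0,1\}$ in Lemma \ref{lem:endos} and Corollary \ref{cor:fixed-pts}.

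The gap is the base case of your induction: that $f$ fixes every truncation $U_N := S \cap \llb 0,N\rrb$, i.e.\ the analogue of Corollary \ref{cor:fixed-pts}\ref{cor:fixed-pts(i)}. Your absorbing identity $X+U_N=U_{N+\max X}$ for $N>F+\max X$ is correct, and with $X=\{0,s\}$ it does propagate fixedness upward from $U_N$ to $U_{N+s}$; but this only reduces the problem to finitely many ``small'' truncations, and for those you offer nothing beyond the assertion that they are ``pinned down using that $f$ preserves atoms and maxima together with $1\notin S$.'' No argument is given, and it is not clear one exists at this level of generality: for small $N$ there can be several subsets of $S\cap\llb 0,N\rrb$ with the same maximum, the same atomic status, and the same $S$-relative gap statistics, so none of the invariants you have established distinguishes $U_N$ from its competitors. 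The same issue recurs in the inductive step, where the ``gap-aware'' accounting of how adding an $S$-interval changes $\fdim_S$ is described but not carried out, and where the auxiliary fixed sets $Z$ (the analogue of Lemma \ref{lem:fixing-the-3-sets}\ref{lem:fixing-the-3-sets(iii)}) would themselves have to be produced by an argument you have not supplied. In short: the skeleton is the right one and the max-preservation step is a solid first lemma toward the conjecture, but the proposal assumes precisely the statement (rigidity of the truncations $U_N$) that makes the conjecture hard, so it cannot be accepted as a proof.
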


Moreover, regardless of whether the conjecture is true or not, we are naturally led to consider an ``inverse problem'' that is probably woven much deeper in the theory:

\begin{question}
For which groups $G$ does there exist a monoid $H$ such that the automorphism group of the reduced power monoid $\mathcal P_{\fin,1}(H)$ of $H$ is isomorphic to $G$? 
\end{question}

We know from Sect.~\ref{sec:intro} that the automorphism group of the monoid $H$ embeds into the automorphism group of $\mathcal P_{\fin,1}(H)$. Yet, it is not quite clear how this can help to answer the question, as we gather from Theorem \ref{thm:main} that the latter group can be larger than the former.

\section*{Acknowledgements}

We, the authors, are thankful to the anonymous referees of an earlier version of this work for their valuable suggestions.

\end{document}